\numberwithin{equation}{section}
\newtheorem{theorem}{Theorem}[section]
\newtheorem{lemma}[theorem]{Lemma}
\newtheorem{proposition}[theorem]{Proposition}
\newtheorem{quest}[theorem]{Question}
\newtheorem{corollary}[theorem]{Corollary}
\theoremstyle{definition}
\newtheorem{example}[theorem]{Example}
\newtheorem{remark}[theorem]{Remark}
\newtheorem{definition}[theorem]{Definition}
\newcommand{\be}{\begin{equation}}
\newcommand{\ee}{\end{equation}}
\newcommand{\bes}{\begin{equation*}}
\newcommand{\ees}{\end{equation*}}
\newcommand{\cA}{\mathcal{A}}
\newcommand{\cD}{\mathcal{D}}
\newcommand{\cE}{\mathcal{E}}
\newcommand{\cH}{\mathcal{H}}
\newcommand{\cK}{\mathcal{K}}
\newcommand{\cL}{\mathcal{L}}
\newcommand{\bB}{\mathbb{B}}
\newcommand{\bC}{\mathbb{C}}
\newcommand{\C}{\mathbb{C}}
\newcommand{\bD}{\mathbb{D}}
\newcommand{\bF}{\mathbb{F}}
\newcommand{\bM}{\mathbb{M}}
\newcommand{\bN}{\mathbb{N}}
\newcommand{\ol}{\overline}
\newcommand{\re}{\operatorname{Re}}
\newcommand{\im}{\operatorname{Im}}
\newcommand{\diag}{\operatorname{diag}}
\newcommand{\id}{\operatorname{id}}
\newcommand{\spn}{\operatorname{span}}
\newcommand{\fB}{{\mathfrak{B}}}
\newcommand{\fC}{{\mathfrak{C}}}
\newcommand{\fD}{{\mathfrak{D}}}
\newcommand{\FORAL}{\text{ for all }}
\newcommand{\dom}{\operatorname{Dom}}
\newcommand{\GL}{\mathrm{GL}}
\begin{document}

\title{A spectral radius for matrices over an operator space}

 \author{Orr Moshe Shalit}
 \address{O.M.S., Technion Israel Institute of Technology\\
 Technion City, Haifa\; 3200003\\
 Israel}

 \author{Eli Shamovich}
 \address{E.S., Department of Mathematics\\
 Ben-Gurion University of the Negev\\
 Beer-Seva\; 8410501\\
 Israel}

 \thanks{The work of O.M. Shalit is partially supported by ISF Grant no.\ 431/20. The work of E. Shamovich is partially supported by BSF grant no.\ 2022235.
 }
 \subjclass[2010]{47A13, 46L52, 47A10, 15A22}
 \keywords{Joint spectral radius, Simultaneous similarity, Noncommutative rational functions, Operator space}

\addcontentsline{toc}{section}{Abstract}

\begin{abstract}
With every operator space structure $\cE$ on $\bC^d$, we associate a spectral radius function $\rho_\cE$ on $d$-tuples of operators. 
For a $d$-tuple $X = (X_1, \ldots, X_d) \in M_n(\bC^d)$ of matrices we show that $\rho_\cE(X)<1$ if and only if $X$ is jointly similar to a tuple in the open unit ball of $M_n(\cE)$, that is, there is an invertible matrix $S$ such that $\|S^{-1}X S\|_{M_n(\cE)}<1$, where $S^{-1} X S =(S^{-1} X_1 S, \ldots, S^{-1} X_d S)$. 
More generally, for all $X\in B(\cK) \otimes_{\min} \cE$ we show that $\rho_\cE(X) < 1$ if and only if there exists an invertible $S\in B(\cK) \otimes I$ such that $\|S^{-1} X S\|<1$. 
When $\cE$ is the row operator space, for example, our spectral radius coincides with the joint spectral radius considered by Bunce, Popescu, and others, and we recover the condition for a tuple of matrices to be simultaneously similar to a strict row contraction. 
When $\cE$ is the minimal operator space $\min(\ell^\infty_d)$, our spectral radius $\rho_\cE$ is related to the joint spectral radius considered by Rota and Strang but differs from it and has the advantage that $\rho_\cE(X)<1$ if and only if $X$ is simultaneously similar to a tuple of strict contractions. 
We show that for an nc rational function $f$ with descriptor realization $(A,b,c)$, the spectral radius $\rho_\cE(A)<1$ if and only the domain of $f$ contains a neighborhood of the noncommutative closed unit ball of the operator space dual $\cE^*$ of $\cE$. 
\end{abstract}

\maketitle

\section{Introduction}\label{sec:intro}

\subsection{Overview} 
The main goal of this paper is the following: {\em Given an operator space $\cE$ and a matrix $X \in M_n(\cE)$ over $\cE$, determine whether there exists a scalar matrix $S \in M_n$ such that $\|S^{-1} X S\|_{M_n(\cE)} < 1$.}
Consider the case $\cE = \bC$, when $X$ is just an $n \times n$ matrix of scalars. 
Clearly, a necessary condition is that the spectrum $\sigma(X)$ of $X$ be contained in the unit disc $\bD$. Considering the upper triangular form of $X$, it is not hard to see that this condition is also sufficient. 
Said differently, $X$ is similar to a strict contraction if and only if the {\em spectral radius} $r(X) = \max\{|\lambda| : \lambda \in \sigma(X)\}$ is strictly less than one. 
The analogous result for operators on an infinite dimensional space was obtained by Rota, as a consequence of his result that every operator with spectral radius strictly less than one is similar to the restriction of the unilateral shift of infinite multiplicity to an invariant subspace \cite{Rota60}.

For a general $\cE$, the situation becomes more interesting. 
Of course, $X$ can be considered to be an operator on {\em some} Hilbert space, so in principle, Rota's theorem applies, but note that our goal is not to find just any operator that induces some similarity but to find a scalar matrix $S$ that does the job, respecting the operator space structure. 
For example, when $\cE = \bC^d$, then $X \in M_n(\bC^d) \cong M_n(\bC)^d$ is a tuple of matrices $X = (X_1, \ldots, X_d)$, and then by {\em similarity} we mean $S^{-1} X S = (S^{-1} X_1 S, \ldots, S^{-1} X_d S)$, that is, a joint similarity of the separate matrices. 
Characterizing when this is possible is the main challenge that we address in this paper. 

The spectrum of an operator is a notion whose generalization to tuples of operators is not immediate. 
By Gelfand's spectral radius formula, if we define the quantity
\be\label{eq:spectral_radius}
\rho(X) := \lim_n \|X^n\|^{1/n} ,
\ee
then $r(X) = \rho(X)$ \cite[Section I.11]{Gelfand}. 
We can reformulate that a necessary and sufficient condition for $X \in M_n(\bC)$ to be similar to a strict contraction is $\rho(X)<1$. 

A satisfactory solution to our problem, extending the above characterization of when an operator is similar to a strict contraction, was obtained in the case that $\cE$ is the row operator space. 
For $X \in B(\cK)^d$, the {\em joint spectral radius} $\rho(X)$ is defined to be
\begin{equation}\label{eq:Pop_spec_rad}
\rho(X) = \lim_{n \to \infty} \left\| \sum_{|w| = n} X^{w} (X^{w})^* \right\|^{1/2n}.
\end{equation}
This quantity first appeared in a paper by Bunce \cite{Bunce}; it is a particular instance of a more general spectral radius defined later by Popescu \cite{Pop14}. 
Popescu showed that $\rho(X) < 1$ if and only if $X$ is jointly similar to a strict row contraction.  
This result was rediscovered in \cite{SSS20} (see also \cite{Pascoe21}), where the joint spectral radius was further studied and applied. It was shown that the joint spectral radius is log-subharmonic on discs and that for a tuple of matrices $X \in M_n^d$ the joint spectral radius is $\rho(X) = \max\{\rho(X^{(1)}) \ldots, \rho(X^{(k)})\}$ where $X^{(1)}, \ldots, X^{(k)}$ are the Holder-Jordan components of $X$.

\begin{remark}\label{rem:matrix}
It is interesting to note that $\rho(X) \leq 1$ does not imply that $X$ is similar to a row contraction; indeed, consider the case $d = 1$ and 
\[
X = \begin{pmatrix} 1 & 1 \\ 0 & 1 \end{pmatrix} .
\]
Clearly, the spectral radius $\rho(X)$ is equal to $1$, but any similarity will take $X$ to an operator unitary equivalent to an upper triangular matrix with $1$s on the diagonal and a nonzero entry in the upper right corner. 
Thus, any operator similar to $X$ must have norm strictly greater than $1$. 
However, $\rho(X)$ is always equal to $\inf\{\|S^{-1} X S\| : S \in \GL_n\}$. 
\end{remark}

The joint spectral radius \eqref{eq:Pop_spec_rad} gives a concrete numerical invariant that determines whether a tuple is similar to an element of the row ball $\fB_d$ (see Example \ref{ex:row_ball} below). 
We shall show that an appropriate adaptation of the formula \eqref{eq:Pop_spec_rad} gives rise to a spectral radius $\rho_\cE$ such that for all $X \in M_n(\cE)$, $\rho_\cE(X)<1$ if and only if there exists $S \in M_n$ such that $\|S^{-1} X S\|_{M_n(\cE)} < 1$. In fact, our definition also works in infinite dimensional spaces, and we obtain a family of numerical invariants that characterize when a tuple of bounded operators is similar to a strict row contraction, or to a tuple of strict contractions, and so forth. 

In Definition \ref{def:gen_SpecRad}, we define the spectral radius $\rho_\cE$ corresponding to an operator space $\cE$. 
Briefly, for $X \in B(\cK) \otimes_{\min} \cE$, the spectral $\rho_\cE(X)$ is defined to be the ordinary spectral radius \eqref{eq:spectral_radius} of $X$ when considered as an element of the C*-algebra $B(\cK) \otimes_{\min} C^*_{\max}(\cE)$ where $C^*_{\max}(\cE)$ is the maximal C*-cover of $\cE$. For $X \in M_n(\cE)$, the spectral radius $\rho_\cE(X)$ is just spectral radius of $X$ when considered as an element of the matrix algebra $M_n(C^*_{\max}(\cE))$ over the maximal C*-algebra $C^*_{\max}(\cE)$ generated by $\cE$.

When $\cE$ is a finite-dimensional operator space, which can be described by a family of matrix norms on $\bC^d$, we obtain a concrete spectral radius for tuples of matrices defined by an explicit formula \eqref{eq:Hsr}. 
For the well-studied case, when $\cE$ is the row operator space, our new spectral radius $\rho_\cE$ coincides with the joint spectral radius \eqref{eq:Pop_spec_rad} mentioned above. 
We note that several generalizations of the joint spectral radius have been suggested in the literature (e.g. \cite{bhat2024spectral,Pop14}), and it seems that our notion differs from them. 

Our main result is Theorem \ref{thm:gen_SpecRad}, which says that when $X \in M_n^d$ is a $d$-tuple of operators, then $\rho_\cE(X)<1$ if and only if $X$ is jointly similar to an element in the corresponding open unit ball. 
Again, when $\cE$ is the row operator space structure on $\bC^d$, we recover the known necessary and sufficient condition for a tuple of matrices to be jointly similar to a strict row contraction. 
When $\cE$ is the minimal operator space $\min(\ell^\infty_d)$ over $\ell^
\infty_d$, we obtain a condition for a tuple to be simultaneously similar to a tuple of strict contractions. 
Determining when a tuple is similar to a strict row contraction or to a tuple of strict contractions is an interesting problem in its own right (see \cite{ando1998simultaneous}). Still, we point out that it also has significant applications in pure and applied mathematics \cite{daubechies1992sets,daubechies1992two}. 
As another modest application, we note that the {\em similarity envelope} of the row ball, i.e., all tuples simultaneously similar to a strict row contraction, played a role in the classification of algebras of bounded nc functions on subvarieties of the row ball \cite{SSS20}. 
We hope that understanding joint similarities into general operator balls will help with the classification of the algebras of bounded nc functions on subvarieties of nc operator balls studied in \cite{SS25,sampat2024weak}. 

In Section \ref{sec:nc_rat}, we apply our spectral radius to the study of nc rational functions. 
In Theorem \ref{thm:bdd_rat_func}, we show that for an nc rational function $f$ with descriptor realization 
\[
f(X) = (c \otimes I)^* \left(I - \sum_{j=1}^d A_j \otimes X_j \right)^{-1} (b \otimes I)
\]
a necessary and sufficient condition for the domain of $f$ to contain an open nc operator ball corresponding to $\cE$ of radius $R > 1$ is that $\rho_{\cE^*}(A)<1$, where $\cE^*$ is the operator space dual of $\cE$. 
We then close the paper by working out a couple of illustrative examples. 
In the case that $\cE$ is the row ball, it was shown in \cite{JMS-rat} that $\rho_{\cE^*}(A)<1$ is a necessary condition for $f$ to be bounded on the unit ball $\bB_\cE$, but in Section \ref{subsec:case} we show by an example of an nc rational function on the nc polydisc that it is not necessary in general. 
Finally, in Section \ref{subsec:switch}, we explore the possibility of switching the roles of $\cE$ and $\cE^*$ and obtain rational functions on different domains. 

\subsection{Preliminaries and notation}\label{sec:pre}

In this paper a tensor product $A \otimes B$ between operators $A \in B(\cK)$ and $B \in B(\cH)$ is to be understood, unless specifically indicated otherwise,  in the spatial sense 
\[
A \otimes B \in B(\cK) \ol{\otimes} B(\cH) \cong B(\cK \otimes \cH). 
\]
When $A$ and $B$ are elements of operator spaces $\cD \subseteq B(\cK)$ and $\cE\subseteq B(\cH)$, we understand $A \otimes B$ as an element of the minimal tensor product 
\[
\cD \otimes \cE = \cD \otimes_{\min} \cE \subseteq B(\cK \otimes \cH).
\]

For an operator space $\cE$, we define the {\em noncommutative (nc) operator ball} $\bB_\cE$ to be 
\begin{equation}\label{eq:op_ball}
\bB_\cE = \cup_{n=1}^\infty \{X \in M_n(\cE) : \|X\|_n<1\}.
\end{equation}
An operator space can always be represented concretely as a subspace $\cE \subseteq B(\cH)$. 
When $\cE$ is finite-dimensional, then it can be represented in an even more concrete manner as follows. 

Consider the ``nc universe" consisting of all $d$-tuples of $n \times n$ matrices
\[
\bM^d = \sqcup_{n=1}^\infty M_n^d .
\]
Given a $d$-dimensional operator space $\cE \subseteq B(\cH)$ with basis $Q_1, \ldots, Q_d$ fixed once and for all, we obtain a concrete (and coordinate dependent) presentation of the operator ball \eqref{eq:op_ball} via the linear operator-valued polynomial $Q(z) = \sum_{j=1}^d z_j Q_j$. 
This polynomial can be evaluated on a $d$-tuple $X = (X_1, \ldots, X_d)$ of matrices by 
\[
Q(X) = \sum_{j=1}^d X_j \otimes Q_j. 
\]
The \emph{nc operator ball} $\bD_Q$ giving a coordinate representation of $\bB_\cE$ is defined as 
\begin{equation} \label{eq:operator_ball}
\bD_Q = \{ X \in \bM^d : \| Q(X) \| < 1 \}.
\end{equation} 
For any $n \in \bN$, $\bD_Q(n) := \bD_Q \cap M_n^d$ can be identified with the open unit ball of $M_n(\cE)$ via 
\begin{equation} \label{eq:identify_DQ}
\bD_Q(n) \ni X \longleftrightarrow Q(X) \in M_n(\cE) \subset M_n(B(\cH))
\end{equation}
and in this way $\bD_Q$ is identified with $\bB_\cE$, 
where an element $X \in \bD_Q(n)$ is endowed with the norm $\|X\| :=\|Q(X)\|$. 
In fact, if we identify $\cE$ with the space $\bC^d$ endowed with the operator space structure given by $Q$, then $\bD_Q$ {\em is} $\bB_\cE$. 
It is convenient to have both points of view available and we shall switch between them freely. 

For notational convenience, we add an ``infinite level" to $\bD_Q$ by defining the proper class
\begin{equation}\label{eq:DQinflevel}
\bD_Q(\infty) = \left\{ X \in B(\cK)^d \textrm{ for some Hilbert space } \cK : \left\|\sum_j X_j \otimes Q_j \right\| < 1 \right\}
\end{equation}
and we define the generalized nc operator unit ball $\bD_Q^{(\infty)}$ as the disjoint union
\begin{equation}\label{eq:DQinf}
\bD_Q^{(\infty)} = \bD_Q \sqcup \bD_Q(\infty).
\end{equation}

The following examples serve to illustrate the notions just introduced. 
We shall refer to them repeatedly later in this paper.
\begin{example}\label{ex:row_ball}
The  {\em row ball} $\fB_d$ is defined to be $\bB_\cE$ when $\cE=\bC^d$ with the row operator space structure
\begin{equation}\label{eq:Ball}
\fB_d = \left\{ X \in \bM^d : \left\|\sum X_j X_j^*\right\| < 1\right\} .
\end{equation} 
Concretely, $\fB_d = \bD_Q$ where $Q_j = E_{1j} \in B(\bC^d)$ is the rank one operator $E_{1j} = e_1 \otimes e_j^* \colon h \mapsto \langle h, e_j \rangle e_1$, where $e_j$ denotes the $j$th standard basis vector in $\bC^d$; 
that is, $E_{1j}$ is the operator represented in the standard basis by the matrix with $1$ in the $j$th slot of the first row and zeros elsewhere. The infinite row ball $\fB_d^{(\infty)}$ denotes the class of all strict row contractions acting on some Hilbert space. 

Similarly, the column ball 
\[
\fC_d = \left\{X \in \bM^d : \left\|\sum X_j^* X_j\right\| < 1 \right\}
\]
is the nc operator ball corresponding to the column operator space, and consists of all strict column contractions acting on a finite dimensional space. 
\end{example}

\begin{example}\label{ex:nc_polydisc}
If $Q_{j} = E_{jj} \in B(\bC^d)$ is the rank one projection $E_{jj} = e_j \otimes e_j^* \colon h \ni \bC^d \to \langle h, e_j \rangle e_j \in \bC^d$ onto the $j$th basis vector, then we have that $Q(z) = \sum_{j=1}^d z_j Q_j = \diag(z_1, \dots, z_d)$, and the resulting nc operator ball $\bD_Q$ is the {\em nc unit polydisc}:
\[
\fD_d = \left\{X \in \bM^d : \|X\|_{\infty} := \max_{1\leq j\leq d} \|X_j\| <1 \right\}.
\]
The nc unit polydisc is the nc operator unit ball corresponding to the minimal operator space $\min(\ell^\infty_d)$ over the Banach space $\ell^\infty_d = (\bC^d, \|\cdot\|_\infty)$. 
\end{example}

\begin{example}\label{ex:diamond}
Let $U_1, \ldots, U_d$ be the unitaries that generate the full group C*-algebra of the free group $C^*(\bF_d)$. 
By \cite[Theorem 9.6.1]{PisierBook}, $\spn\{U_1, \ldots, U_d\}$ is the maximal operator space $\max(\ell^1_d)$ over the Banach space $\ell^1_d = (\bC^d, \|\cdot\|_1)$, which is also the operator space dual $(\min(\ell^\infty_d))^*$. 
We define the {\em nc diamond} to be 
\[
\fD_d^\circ = \left\{X \in \bM^d : \left\| \sum_{j=1}^d X_j \otimes U_j \right\| < 1 \right\}. 
\]
The notation derives from the fact that an equivalent way to define the nc diamond is as the {\em polar dual} of the nc polydisc: 
\[
\fD_d^\circ = (\fD_d)^\circ := \left\{X \in \bM^d : \left\| \sum X_j \otimes A_j \right\| < 1 , \textrm{ for all } A \in \fD_d\right\} .
\]
\end{example}

In this paper, we shall use extensively bounded nc functions on nc operator balls. 
An essentially self-contained presentation of everything we require can be found in \cite{SS25}, including pointers to references to where a thorough introduction to nc function theory can be found. 
Here, we only briefly sketch the tools that we need.  
Given an operator space $\cE = \spn\{Q_1, \ldots, Q_d\}$ and a corresponding nc operator ball $\bD_Q$, 
recall that a function $f \colon \bD_Q \to \bM^1$ is an nc function if it is 
\begin{enumerate}
\item graded: $f(X) \in M_n$ if $X \in M_n^d$; 
\item respects direct sums: $f(X \oplus Y) = f(X) \oplus f(Y)$; 
\item respects similarities: $f(S^{-1} XS) = S^{-1}f(X) S$. 
\end{enumerate}
Let $H^\infty(\bD_Q)$ denote the algebra of bounded nc functions on $\bD_Q$ equipped with the norm
\[
\|f\| = \sup_{X \in \bD_Q} \|f(X)\| . 
\]
We let $A(\bD_Q)$ denote the closure of nc polynomials in $H^\infty(\bD_Q)$. 
Then $A(\bD_Q)$ consists precisely of the bounded nc functions that are uniformly continuous on $\ol{\bD_Q}$. 
Here, uniform continuity can be interpreted in the most naive way: an nc function $f \colon \bD_Q \to \bM^1$ is said to be uniformly continuous if, for every $\epsilon>0$, the exists $\delta > 0$, such that for all $n$ and all $X, Y \in \bD_Q(n)$, 
\[
\|X - Y\| < \delta \Longrightarrow \|f(X) - f(Y)\| < \epsilon . 
\]
The algebra $A(\bD_Q)$ has a natural operator algebra structure, and it can be identified with the universal unital operator algebra $OA_u(E)$ generated by the operator space dual $E = \cE^*$ of $\cE$ (see \cite[Section 7]{SS25}). The finite-dimensional, completely contractive representations of $A(\bD_Q)$ are in one-to-one correspondence with the points in $\ol{\bD_Q}$, where every $X \in \ol{\bD_Q}$ gives rise to a point evaluation $\pi_X \colon f \mapsto f(X)$. 

Every $f \in H^\infty(\bD_Q)$ has a convergent Taylor-Taylor series around $0$
\[
f(X) = \sum_{n=0}^\infty f_n(X) = \sum_{n=0}^\infty \sum_{|w|=n} c_w X^w \quad , \quad X \in \bD_Q, 
\]
where the projection onto the {\em $n$th homogeneous component} $f_n(X) = \sum_{|w|=n} c_w X^w$ is completely contractive, in particular $\|f_n\| \leq \|f\|$. 
The series $f = \sum_n f_n$ converges pointwise to $f$, and if $f \in A(\bD_Q)$, then its Ces\`{a}ro means converge in norm. 

Finally, one can also consider operator-valued (or matrix-valued) nc functions, which are graded, direct sum and similarity preserving functions $\bD_Q \to \bM(B(\cH)) := \sqcup_{n=1}^\infty M_n(B(\cH))$, where $\cH$ is some Hilbert space. Similar definitions and results hold in this case. 
In particular, we shall need the following proposition. 

\begin{proposition}\label{prop:series}
If $f \colon \bD_Q \to \bM(M_k)$ is a bounded nc function, then $f$ has a convergent power series 
\[
f(X) = \sum_{n=0}^\infty f_n(X) = \sum_{n=0}^\infty \sum_{|w|=n} C_w \otimes X^w \quad , \quad X \in \bD_Q, 
\]
for some matrix coefficients $C_w \in M_k(\bC)$, and the {\em $n$th homogeneous component} $f_n(X) = \sum_{|w|=n} C_w \otimes X^w$ is bounded: $\|f_n\| \leq \|f\|$. 
The series $\sum_n f_n$ converges absolutely and uniformly on $r\bD_Q$ for every $r < 1$, and hence $f$ is uniformly continuous on $r\bD_Q$. 
\end{proposition}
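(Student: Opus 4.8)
The plan is to reduce everything to the one-variable analytic function obtained by restricting $f$ to a complex line through the origin, exactly as in the scalar Taylor--Taylor expansion recalled above, and then to read off the homogeneous components by Fourier/Cauchy extraction. First I would record that $\bD_Q$ is circular: since $Q(\lambda X) = \lambda\, Q(X)$, we have $\|Q(\lambda X)\| = |\lambda|\,\|Q(X)\|$, so $X \in \bD_Q(n)$ and $|\lambda| \le 1$ imply $\lambda X \in \bD_Q(n)$. Fixing $X \in \bD_Q(n)$, so that $\|Q(X)\| < 1$, the slice $g_X(\lambda) := f(\lambda X)$ takes values in $M_n(M_k) \cong M_{nk}$ and, by the standard local theory of nc functions (a bounded nc function is analytic on such slices; see \cite{SS25} and the references therein), is analytic on the disc $|\lambda| < \|Q(X)\|^{-1}$, which strictly contains the closed unit disc. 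In particular its Maclaurin series converges at $\lambda = 1$.

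Next I would define the homogeneous components by the averaging integral
\[
f_n(X) = \frac{1}{2\pi} \int_0^{2\pi} f(e^{i\theta} X)\, e^{-in\theta}\, d\theta ,
\]
which is precisely the $n$th Taylor coefficient of $g_X$, so that $f(X) = g_X(1) = \sum_{n=0}^\infty f_n(X)$ for every $X \in \bD_Q$. Because $f$ is graded, respects direct sums and respects similarities, and because the integral is a norm limit of Riemann sums in which these linear operations are preserved, each $f_n$ is again an nc function; and $g_X(\lambda) = \sum_n f_n(X)\lambda^n$ forces $f_n$ to be homogeneous of degree $n$, i.e.\ $f_n(\mu X) = \mu^n f_n(X)$. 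The norm estimate is then immediate: using circularity, $\|f_n(X)\| \le \frac{1}{2\pi}\int_0^{2\pi}\|f(e^{i\theta}X)\|\,d\theta \le \|f\|$, and taking the supremum over $X \in \bD_Q$ gives $\|f_n\| \le \|f\|$.

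The main substantive step is to identify each homogeneous component with a homogeneous nc polynomial. Here I would invoke the matrix-valued analog of the Taylor--Taylor theory used in the scalar statement: a bounded homogeneous nc function of degree $n$ is a homogeneous nc polynomial, so $f_n(X) = \sum_{|w|=n} C_w \otimes X^w$ for uniquely determined coefficients $C_w \in M_k(\bC)$. This is where the bulk of the nc function theory enters, and it is the step I expect to require the most care; everything else is formal manipulation of the one-variable expansion. Finally, for the convergence claim, homogeneity together with $\|f_n\|\le\|f\|$ gives $\sup_{X\in r\bD_Q}\|f_n(X)\| \le r^n\|f\|$, since $X \in r\bD_Q$ may be written as $X = rY$ with $Y \in \bD_Q$ and then $f_n(X) = r^n f_n(Y)$. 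Hence $\sum_n \sup_{r\bD_Q}\|f_n\| \le \|f\|\sum_n r^n = \|f\|/(1-r) < \infty$, yielding absolute and uniform convergence of $\sum_n f_n$ on $r\bD_Q$. Since each partial sum is an nc polynomial, hence uniformly continuous on the bounded set $r\bD_Q$, and the convergence is uniform, the limit $f$ is uniformly continuous on $r\bD_Q$ as well.
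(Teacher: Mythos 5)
Your argument is correct, and it is worth comparing with what the paper actually does: the paper disposes of the entire first part by citing \cite[Proposition 3.5]{SS25}, and then proves the convergence statement by exactly your scaling argument ($f(X)=\sum_n r^n f_n(X/r)$ with $\|f_n\|\le\|f\|$, so the tail is dominated by $\|f\|\sum r^n$). What you have done differently is to reconstruct the content of the cited proposition: you exploit the circularity of $\bD_Q$ (valid, since $Q$ is linear) to define $f_n$ by the Cauchy--Fourier average $\frac{1}{2\pi}\int_0^{2\pi}f(e^{i\theta}X)e^{-in\theta}\,d\theta$, check that this average is again an nc function homogeneous of degree $n$ with $\|f_n\|\le\|f\|$, and get pointwise convergence of $\sum_n f_n(X)$ from the fact that the slice $\lambda\mapsto f(\lambda X)$ is analytic on a disc of radius $\|Q(X)\|^{-1}>1$. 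This buys a more self-contained and transparent derivation of the norm bound $\|f_n\|\le\|f\|$, which is the estimate the whole proposition turns on. Note, however, that you have not fully eliminated the appeal to nc function theory: the identification of each bounded $n$-homogeneous nc function with an nc polynomial $\sum_{|w|=n}C_w\otimes X^w$ is precisely the Taylor--Taylor machinery that \cite[Proposition 3.5]{SS25} packages (together with the local boundedness $\Rightarrow$ analyticity fact you use for the slices), so that step remains a citation in your version too. Two minor points you may wish to make explicit: the averaging integral respects direct sums and similarities because $e^{i\theta}(S^{-1}XS)=S^{-1}(e^{i\theta}X)S$ and the integral is a norm limit of Riemann sums, as you say; and the uniform (level-independent) continuity of the polynomial partial sums on $r\bD_Q$ uses that the coordinate functionals of the finite-dimensional operator space $\cE$ are completely bounded, so $\max_j\|X_j\|$ is controlled by $\|Q(X)\|$ uniformly in the matrix size.
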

\begin{proof}
The first part follows from \cite[Proposition 3.5]{SS25}. 
The uniform convergence and continuity then follows readily from $\|f_n\|\leq \|f\|$, because 
\[
f(X) = \sum_{n=0}^\infty r^n f_n(X/r)
\] 
converges absolutely and uniformly on $r\bD_Q$. 
\end{proof}

\section{Joint spectral radii for general nc operator balls}\label{sec:spectral_rad}

Every operator space $\cE$ can be embedded in a universal unital operator algebra $OA_u(\cE)$ which is generated by $\cE$ as an operator algebra; for a definition of $OA_u(\cE)$, see \cite[pp. 112--113]{PisierBook} (as mentioned in the introduction, by \cite[Section 7]{SS25}, this universal operator algebra can also be defined as the algebra $A(\bB_{\cE^*})$ of uniformly continuous nc functions on $\bB_{\cE^*}$). 
Furthermore, $\cE$ can be embedded in a universal maximal unital C*-cover $C^*_{\max}(\cE)$, and $OA_u(\cE)$ can be identified with the unital norm closed algebra generated by $\cE$ in $C^*_{\max}(\cE)$; see \cite[Remark 8.16]{PisierBook}. 
Let $\iota \colon \cE \to OA_u(\cE) \subset C^*_{\max}(\cE)$ denote the completely isometric embedding of $\cE$ in $OA_u(\cE)\subset C^*_{\max}(\cE)$. 
For brevity, we shall use $\iota$ to denote also the ampliations 
\[
\id_{B(\cK)} \otimes \iota \colon B(\cK) \otimes_{\min} \cE \to B(\cK) \otimes_{\min} C^*_{\max}(\cE), 
\]
where $\cK$ is some Hilbert space. 

\begin{definition}\label{def:gen_SpecRad}
Let $\cE$ be an operator space and let $\cK$ be a Hilbert space. We define $\rho_\cE(X)$ for $X \in B(\cK) \otimes_{\min} \cE$,  to be the spectral radius of $X$, when considered as an element of $B(\cK) \otimes_{\min} C^*_{\max}(\cE)$. 
In other words
\[
\rho_\cE(X) = \rho(\iota(X)) = \lim_{k \to \infty}\|\iota(X)^k\|^{1/k}. 
\]
\end{definition}

We shall now describe a more concrete spectral radius. 
Suppose that $\cE \subseteq B(\cH)$ is a finite-dimensional operator space. Let $Q_1, \ldots, Q_d \in \cE$ be a basis, giving rise via \eqref{eq:identify_DQ} to an operator space structure on $\bC^d$, or, equivalently, to an nc operator ball $\bD_Q$. 
We define a joint spectral radius associated with $\bD_Q$ by making use of the Haagerup tensor product as follows (for background on the Haagerup tensor product, see for example \cite[Chapter 17]{PaulsenBook} or \cite[Section 1.5]{PisierBook}).

\begin{definition}\label{def:gen_SpecRad_concrete}
For $Q_1, \ldots, Q_d \in B(\cH)$ and $T = (T_1, \ldots, T_d) \in B(\cK)^d$, we define the {\em $Q$-spectral radius} of $T$ to be 
\begin{equation}\label{eq:Hsr}
\rho_Q(T) = \lim_{n \to \infty} \left \|\sum_{|w|=n} T^w \otimes Q_{w_1} \otimes_h Q_{w_2} \otimes_h \cdots \otimes_h Q_{w_n} \right\|^{1/n} 
\end{equation}
where the sum is over all words $w = w_1 w_2 \cdots w_n$ in $d$ letters $w_i \in \{1,2,\ldots, d\}$.
\end{definition}

In the above definition, we write $T^w$ for $T_{w_1} \cdots T_{w_n}$, and the elements
\[
\sum_{|w|=n} T_{w_1} \cdots T_{w_n} \otimes Q_{w_1} \otimes_h Q_{w_2} \otimes_h \cdots \otimes_h Q_{w_n}
\]
belong to the operator space
\[
B(\cK) \otimes_{\min} (\cE \otimes_h \cdots \otimes_h \cE),  
\]
where $\cE \otimes_h \cdots \otimes_h \cE$ is the $n$-fold Haagerup tensor product of $\cE$ with itself. 
As promised at the beginning of Section \ref{sec:pre}, a tensor product with no further indication is interpreted in the spatial (or minimal) sense. 
We use the decorated tensor products $\otimes_h$ to remind the reader that the element $Q_{w_1} \otimes_h Q_{w_2} \otimes_h \cdots \otimes_h Q_{w_n}$ belongs to $\cE \otimes_h \cdots \otimes_h \cE$. 
The norm appearing on the right hand side of \eqref{eq:Hsr} is the norm on $B(\cK) \otimes_{\min} (\cE \otimes_h \cdots \otimes_h \cE)$. 

Note that $\rho_Q$ depends on the choice of basis. 
We shall sometimes write $\rho_{\bD_Q}$ for $\rho_Q$. 
The two spectral radii we defined are closely related. If we consider $\cE$ to be $\bC^d$ with the operator space structure induced by $Q$ then $\rho_\cE(X) = \rho_Q(X)$ for every $X$. 

\begin{proposition}\label{prop:rho_Q_rho_E}
Let $Q_1,\ldots, Q_d \in B(\cH)$ and $T \in B(\cK)^d$. Let $\cE$ be the operator space spanned by $Q$, and let $X = \sum_{j=1}^d T_j \otimes Q_j \in B(\cK) \otimes_{\min} \cE$. Then, $\rho_Q(T) = \rho_\cE(X)$.   
\end{proposition}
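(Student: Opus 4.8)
The plan is to compare, for each fixed $k$, the norm of $\iota(X)^k$ in $B(\cK) \otimes_{\min} C^*_{\max}(\cE)$ with the norm of the $k$-fold Haagerup expression appearing in \eqref{eq:Hsr}, then take $k$th roots and pass to the limit. Writing $X = \sum_{j} T_j \otimes Q_j$, I would first expand the $k$th power inside the minimal tensor product:
\[
\iota(X)^k = \sum_{|w|=k} T^w \otimes \iota(Q_{w_1}) \iota(Q_{w_2}) \cdots \iota(Q_{w_k}),
\]
where each product $\iota(Q_{w_1}) \cdots \iota(Q_{w_k})$ is taken in $C^*_{\max}(\cE)$ and in fact lies in the subalgebra $OA_u(\cE)$. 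Since $OA_u(\cE)$ embeds completely isometrically in $C^*_{\max}(\cE)$ and the minimal tensor product is injective (it preserves complete isometries), the norm of $\iota(X)^k$ is the same whether computed in $B(\cK) \otimes_{\min} C^*_{\max}(\cE)$ or in $B(\cK) \otimes_{\min} OA_u(\cE)$. This lets me work entirely inside the universal operator algebra, and in particular $\rho_\cE(X) = \lim_k \|\iota(X)^k\|^{1/k}$ may be computed using the $OA_u(\cE)$ norms.

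The crux is the graded structure of $OA_u(\cE)$: the multiplication map
\[
m_k \colon \cE \otimes_h \cdots \otimes_h \cE \longrightarrow OA_u(\cE), \qquad y_1 \otimes_h \cdots \otimes_h y_k \mapsto \iota(y_1) \cdots \iota(y_k),
\]
is a complete isometry of the $k$-fold Haagerup tensor power onto the homogeneous degree-$k$ subspace of $OA_u(\cE)$ (see \cite{PisierBook}). This is exactly the feature that makes the Haagerup tensor product the right one here: it linearizes completely contractive multilinear maps, and the universal property of $OA_u(\cE)$ upgrades the resulting contraction to an isometry. I expect this identification to be the main point of the argument; everything else is bookkeeping.

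Granting the complete isometry $m_k$, I would tensor it with $\id_{B(\cK)}$. Because the minimal tensor product preserves complete isometries, the map $\id_{B(\cK)} \otimes m_k \colon B(\cK) \otimes_{\min} (\cE \otimes_h \cdots \otimes_h \cE) \to B(\cK) \otimes_{\min} OA_u(\cE)$ is again a complete isometry, and applying it to $\sum_{|w|=k} T^w \otimes Q_{w_1} \otimes_h \cdots \otimes_h Q_{w_k}$ produces precisely $\iota(X)^k$. Hence
\[
\Big\| \sum_{|w|=k} T^w \otimes Q_{w_1} \otimes_h \cdots \otimes_h Q_{w_k} \Big\| = \big\| \iota(X)^k \big\|
\]
for every $k$. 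Taking $k$th roots and letting $k \to \infty$ yields $\rho_Q(T) = \rho_\cE(X)$, as claimed.
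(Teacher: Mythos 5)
Your argument is correct and is essentially the paper's own proof: both rest on the completely isometric identification of the $k$-fold Haagerup tensor power $\cE \otimes_h \cdots \otimes_h \cE$ with the degree-$k$ homogeneous subspace of $OA_u(\cE) \subset C^*_{\max}(\cE)$ (Proposition 6.6 of Pisier's book), which the paper cites directly. You merely spell out the bookkeeping that the paper leaves implicit, namely expanding $\iota(X)^k$ and tensoring the isometry with $\id_{B(\cK)}$ via injectivity of the minimal tensor product.
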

\begin{proof}
This follows from the completely isometric identification of the $n$-fold Haagerup tensor product of $\cE$ with itself and the space of $n$-homogeneous polynomials in the algebra $OA_u(\cE) \subset C^*_{\max}(\cE)$; see Proposition 6.6 in \cite{PisierBook}. 
\end{proof}

In particular, the classical spectral radius formula implies that the limit in \eqref{eq:Hsr} exists. 
Examples will be given below. 

\begin{remark}
Let $Q_1,\ldots,Q_d \in B(\cH)$ and $T = (T_1,\ldots,T_d) \in B(\cK)^d$. We consider the unital operator algebra $\cA = \overline{\operatorname{Alg}(1, T_1,\ldots, T_d)}^{\|\cdot\|}$  generated by the $T_i$. If $\varphi \colon \cA \to B(\cL)$ is a unital completely contractive homomorphism, then we set $\varphi(T) = (\varphi(T_1), \ldots, \varphi(T_d))$. We have that
\[
\rho_Q(\varphi(T)) \leq \rho_Q(T).
\]
To see this note that $\varphi$ extends to a UCP map on $B(\cK)$. Therefore, for every word $w$, we have that $\varphi(T^w) = V^* \pi(T^w) V$, where $\pi \colon B(\cK) \to B(\cL)$ is a $*$-homomorphism and $V$ is an isometry. The inequality now follows. In particular, if $K_0 \subset K$ is a semi-invariant subspace for $\cA$, then
\[
\rho_Q(P_{K_0} T|_{K_0}) \leq \rho_Q(T).
\]
\end{remark}

Proposition \ref{prop:rho_Q_rho_E} implies the following lemma as both a generalization and an elucidation of \cite[Lemma 4.7]{SSS20}.

\begin{lemma}
For every holomorphic function $f \colon \bD \to M_n^d$, the function $\rho_Q(f(z))$ is log-subharmonic.
\end{lemma}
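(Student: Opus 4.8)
The plan is to reduce the claim to a known log-subharmonicity result by passing through the identification $\rho_Q(T) = \rho_\cE(X)$ established in Proposition \ref{prop:rho_Q_rho_E}. Given a holomorphic $f \colon \bD \to M_n^d$, write $f(z) = (f_1(z), \ldots, f_d(z))$ and set $X(z) = \sum_{j=1}^d f_j(z) \otimes Q_j \in M_n \otimes_{\min} \cE \subseteq M_n(C^*_{\max}(\cE))$. Then $z \mapsto X(z)$ is a holomorphic function into the fixed Banach algebra $M_n(C^*_{\max}(\cE))$, since each $f_j$ is holomorphic with values in $M_n$ and the $Q_j$ are fixed. By Definition \ref{def:gen_SpecRad} and Proposition \ref{prop:rho_Q_rho_E}, we have $\rho_Q(f(z)) = \rho_\cE(X(z)) = \rho(\iota(X(z)))$, the ordinary spectral radius of a holomorphic $M_n(C^*_{\max}(\cE))$-valued function.

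The heart of the matter is then the general fact that the spectral radius of a holomorphic Banach-algebra-valued function is log-subharmonic. First I would invoke Vesentini's theorem: if $g \colon \bD \to \cA$ is holomorphic into a unital Banach algebra $\cA$, then $z \mapsto \log \spr(g(z))$ is subharmonic on $\bD$, where $\spr$ denotes the spectral radius. This is precisely the classical result that underlies \cite[Lemma 4.7]{SSS20} in the special case of the row operator space. Applying it with $\cA = M_n(C^*_{\max}(\cE))$ and $g = \iota \circ X$ yields that $\log \rho_Q(f(z)) = \log \spr(\iota(X(z)))$ is subharmonic, which is the assertion.

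The main obstacle is to make sure that the composite $z \mapsto \iota(X(z))$ is genuinely holomorphic as a map into $M_n(C^*_{\max}(\cE))$, so that Vesentini applies. This is where I expect to spend the most care: $\iota$ is a fixed linear (completely isometric) embedding, hence bounded, and $X(z)$ is a finite sum $\sum_j f_j(z) \otimes Q_j$ with each matrix entry of $f_j$ an ordinary holomorphic scalar function, so $X(\cdot)$ is holomorphic into the finite-dimensional space $M_n \otimes \cE$ and composing with the bounded linear map $\iota$ preserves holomorphy. Once holomorphy is confirmed, the rest is immediate. A secondary point worth a remark is the degenerate behaviour where $\spr(g(z)) \equiv 0$ or vanishes at isolated points; there $\log \spr$ takes the value $-\infty$, but subharmonicity is understood in the extended sense (a function identically $-\infty$, or equal to $-\infty$ on a polar set, is still subharmonic), so the statement requires no modification. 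I would note that this generalizes \cite[Lemma 4.7]{SSS20} because for the row operator space $\rho_Q$ specializes to the joint spectral radius \eqref{eq:Pop_spec_rad}, and it elucidates that lemma by revealing the abstract source of the log-subharmonicity as Vesentini's theorem applied in the C*-cover.
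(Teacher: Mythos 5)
Your proof is correct and follows exactly the paper's argument: identify $\rho_Q(f(z))$ with the ordinary spectral radius of the holomorphic $M_n(C^*_{\max}(\cE))$-valued function $\iota(X(z))$ via Proposition \ref{prop:rho_Q_rho_E} and then apply Vesentini's theorem. The extra care you take with holomorphy of the composite and the $-\infty$ values of $\log$ is fine but not needed beyond what the paper records.
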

\begin{proof}
The spectral radius $\rho_Q(f(z))$ of the tuple valued function $f$ is equal, by definition, to the spectral radius of the operator valued function $\iota(f(z))$. Therefore, the log-subharmonicity result is an immediate consequence of \cite[Theorem 1]{Vesentini-maximum}, which says that the spectral radius is log-subharmonic.
\end{proof}

\begin{corollary}
Let $Q_1,\ldots,Q_d \in B(\cH)$ and let $f \colon \bD \to \bD_Q$ be an analytic function, such that $f(0) = 0$. Then, for every $z \in \bD$, $\rho_Q(f(z)) \leq |z|$. Moreover, $\rho_Q(f'(0)) \leq 1$.
\end{corollary}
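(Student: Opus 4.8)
The plan is to reduce both assertions to a single application of the maximum principle for subharmonic functions, applied to the holomorphic map obtained by dividing $f$ by $z$.

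First I would record two elementary properties of $\rho_Q$. From the defining formula \eqref{eq:Hsr}, the spectral radius is homogeneous of degree one: since $(\lambda T)^w = \lambda^{|w|} T^w$, we get $\rho_Q(\lambda T) = |\lambda|\,\rho_Q(T)$ for every $\lambda \in \bC$. Second, by Proposition \ref{prop:rho_Q_rho_E}, the fact that the spectral radius in a C*-algebra is dominated by the norm, and the complete isometry of $\iota$, we have $\rho_Q(T) = \rho_\cE(Q(T)) \le \|Q(T)\|_{M_n(\cE)}$; in particular $\rho_Q(f(z)) < 1$ for every $z \in \bD$, because $f(z) \in \bD_Q$.

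Next I would introduce $h(z) := f(z)/z$. Since $f \colon \bD \to M_n^d$ is holomorphic with $f(0) = 0$, each scalar entry of $f$ is a holomorphic function vanishing at the origin, so $h$ extends to a holomorphic map $h \colon \bD \to M_n^d$ with $h(0) = f'(0)$. By the preceding lemma, the function $v(z) := \log \rho_Q(h(z))$ is subharmonic on $\bD$ (if $v \equiv -\infty$ then $\rho_Q(h(z)) \equiv 0$ and both claims are trivial, so assume otherwise). For $z \neq 0$, homogeneity gives $\rho_Q(h(z)) = \rho_Q(f(z))/|z|$, hence $v(z) = \log \rho_Q(f(z)) - \log|z|$. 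Since $\rho_Q(f(z)) < 1$ the first term is $\le 0$, while $-\log|z| \to 0$ as $|z| \to 1^-$; therefore $\limsup_{|z|\to 1^-} v(z) \le 0$.

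Finally, the maximum principle for subharmonic functions yields $v \le 0$ on all of $\bD$. Reading this off for $z \neq 0$ gives $\rho_Q(f(z)) \le |z|$, and evaluating at $z = 0$ gives $\rho_Q(f'(0)) = \rho_Q(h(0)) \le 1$. I expect the only genuine subtlety to be the behavior at the origin: the spectral radius is in general merely upper semicontinuous, so one cannot simply pass to the limit in $\rho_Q(f(z)/z) \le 1$ as $z \to 0$. Dividing by $z$ first and phrasing everything through the subharmonic function $v$ circumvents this, since the conclusion at $0$ then follows from the maximum principle rather than from any continuity of $\rho_Q$.
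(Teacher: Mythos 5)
Your proof is correct and is essentially the argument the paper intends: the corollary is stated without proof as an immediate consequence of the preceding log-subharmonicity lemma, via the standard Schwarz-lemma argument for subharmonic functions. Your care in dividing by $z$ first (so that the conclusion at the origin comes from the maximum principle applied to $\log\rho_Q(f(z)/z)$ rather than from any continuity of $\rho_Q$) is exactly the right way to handle the only delicate point.
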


We now come to our main result. Recall the definition of $\bD^{(\infty)}_Q$ from \eqref{eq:operator_ball}, \eqref{eq:DQinflevel} and \eqref{eq:DQinf}.
\begin{theorem}\label{thm:gen_SpecRad}
For every $d$-tuple of operators $T = (T_1, \ldots, T_d)$, 
\[
\rho_Q(T)<1 \Longleftrightarrow T \textrm{ is similar to a point in } \bD^{(\infty)}_Q.
\]
\end{theorem}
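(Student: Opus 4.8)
The plan is to recast the statement in terms of representations of the universal operator algebra $A(\bD_Q)$, and to extract the scalar similarity from Paulsen's similarity theorem, whose similarity automatically acts on the coefficient space. Throughout write $X=\sum_{j}T_j\otimes Q_j$, so that by Proposition~\ref{prop:rho_Q_rho_E} we have $\rho_Q(T)=\rho_\cE(X)=\rho(\iota(X))$, the spectral radius of $\iota(X)$ in the C*-algebra $B(\cK)\otimes_{\min}C^*_{\max}(\cE)$, and observe that $S^{-1}TS\in\bD_Q^{(\infty)}$ says precisely that $\|Q(S^{-1}TS)\|=\|(S^{-1}\otimes1)\,\iota(X)\,(S\otimes1)\|<1$. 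The implication $\Longleftarrow$ is then immediate: the spectral radius of an element of a C*-algebra never exceeds its norm and is invariant under conjugation by the invertible element $S\otimes1$, so if $S^{-1}TS\in\bD_Q^{(\infty)}$ then
\[
\rho_Q(T)=\rho(\iota(X))=\rho\big((S^{-1}\otimes1)\iota(X)(S\otimes1)\big)\le\|Q(S^{-1}TS)\|<1.
\]

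For the converse I would first record why the naive approach fails, since this is the crux. Because $\rho(\iota(X))<1$, the Rota element $P=\sum_{k\ge0}(\iota(X)^k)^*\iota(X)^k$ converges and satisfies $\iota(X)^*P\iota(X)=P-1$, which yields a similarity turning $\iota(X)$ into a strict contraction; however $P$ lies in $B(\cK)\otimes_{\min}C^*_{\max}(\cE)$ and is not of the form $P_0\otimes1$, so it does not provide the \emph{scalar} similarity demanded by the theorem. To produce a scalar similarity I would instead consider the unital homomorphism $\pi_T$ on nc polynomials determined by $z_j\mapsto T_j$, viewed as $B(\cK)$-valued, and use that $A(\bD_Q)\cong OA_u(\cE^*)$, whose $n$-homogeneous part is completely isometric to the $n$-fold Haagerup power of $\cE^*$. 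Via the duality $(\cE^*)^*=\cE$ and $\mathrm{CB}(\cE^*,B(\cK))\cong B(\cK)\otimes_{\min}\cE$, the completely bounded norm of the restriction $\pi_T^{(n)}$ of $\pi_T$ to the $n$-homogeneous part equals $\big\|\sum_{|w|=n}T^w\otimes Q_{w_1}\otimes_h\cdots\otimes_h Q_{w_n}\big\|$, the $n$-th term of \eqref{eq:Hsr}; hence $\rho_Q(T)=\lim_n\|\pi_T^{(n)}\|_{\mathrm{cb}}^{1/n}$.

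Granting this, fix $r\in(\rho_Q(T),1)$, so $\|\pi_T^{(n)}\|_{\mathrm{cb}}\le Cr^n$ for some constant $C$. Since every homogeneous projection is completely contractive (so $\|f_n\|\le\|f\|$ at every matrix level, as recorded before Proposition~\ref{prop:series}), summing the homogeneous decomposition gives $\|\pi_T\|_{\mathrm{cb}}\le\sum_n\|\pi_T^{(n)}\|_{\mathrm{cb}}\le C/(1-r)$ on polynomials, and $\pi_T$ therefore extends to a completely bounded unital homomorphism of $A(\bD_Q)$ into $B(\cK)$. Paulsen's similarity theorem now provides an invertible $S\in B(\cK)$ for which the similar representation is completely contractive; evaluating the degree-one completely bounded estimate on this representation yields $\|Q(S^{-1}TS)\|\le1$. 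To pass from the closed to the open ball, I would run the same argument with $RT$ in place of $T$ for some $R\in(1,1/\rho_Q(T))$: then $\rho_Q(RT)=R\,\rho_Q(T)<1$, so some scalar $S$ gives $\|Q(S^{-1}(RT)S)\|\le1$, whence $\|Q(S^{-1}TS)\|\le1/R<1$ and $S^{-1}TS\in\bD_Q^{(\infty)}$.

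The main obstacle is exactly the scalarity of the similarity, which is what defeats the direct C*-algebraic Rota construction. The resolution is the observation that $\rho_Q$ measures the exponential growth rate of the homogeneous completely bounded norms of $\pi_T$, so that $\rho_Q(T)<1$ is precisely the summability that makes $\pi_T$ completely bounded, after which Paulsen's theorem implements the similarity by an operator on $\cK$ alone, which is scalar with respect to the operator-space factor. The step that demands genuine care is the identification $\|\pi_T^{(n)}\|_{\mathrm{cb}}=\big\|\sum_{|w|=n}T^w\otimes Q_{w_1}\otimes_h\cdots\otimes_h Q_{w_n}\big\|$, that is, the bookkeeping with $A(\bD_Q)=OA_u(\cE^*)$, the duality $(\cE^*)^*=\cE$, and the self-duality of the finite-dimensional Haagerup tensor powers.
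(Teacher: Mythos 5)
Your proposal is correct and follows essentially the same route as the paper's proof: identify the homogeneous parts of $A(\bD_Q)\cong OA_u(\cE^*)$ with Haagerup tensor powers, use the duality with $\cE\otimes_h\cdots\otimes_h\cE$ to read off $\|\pi_{n,T}\|_{\mathrm{cb}}$ as the $n$-th term of \eqref{eq:Hsr}, sum to get complete boundedness of $\pi_T$, apply Paulsen's similarity theorem, and rescale ($RT$ versus the paper's $(1+\epsilon)T$) to land in the open ball. The only cosmetic difference is that your backward implication goes through the C*-algebraic spectral radius rather than the direct Haagerup-norm estimate $\|\sum_{|w|=n}T^w\otimes Q_{w_1}\otimes_h\cdots\otimes_h Q_{w_n}\|\le t^n$, which are equivalent by Proposition~\ref{prop:rho_Q_rho_E}.
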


\begin{proof}
The backward implication is straightforward, and we dispose of it first. 
Since $\rho_Q$ is similarity invariant it suffices to show that $\rho_Q(T)<1$ for $T \in \bD^{(\infty)}_Q$. 
Now, $T \in \bD^{(\infty)}_Q$ means that $\|\sum T_j \otimes Q_j\|\leq t$ for some $t<1$. 
By one of the basic definitions of the Haagerup tensor product \cite[Equation (5.6)]{PisierBook}, we have that 
\[
\left\| \sum_{|w|=n} T_{w_1} \cdots T_{w_n} \otimes Q_{w_1} \otimes_h Q_{w_2} \otimes_h \cdots \otimes_h Q_{w_n}\right\| \leq t^n
\]
and it follows that $\rho_Q(T)\leq t <1$. 

Suppose now that $T \in B(\cK)^d$ and that $\rho_Q(T)<1$. 
We will show that $\pi_T \colon f \mapsto f(T)$ is a well-defined completely bounded homomorphism from $A(\bD_Q)$ into $B(\cK)$. 
Every $f \in A(\bD_Q)$ has a power series representation
\[
f(Z) = \sum_{n=0}^\infty \sum_{|w|=n} c_w Z^w = \sum_{n=0}^\infty f_n(Z) \in A(\bD_Q), 
\]
where, by Theorem 7.10 in \cite{KVV14}, this series converges uniformly and absolutely in $r\bD_Q$ for every $r\in (0,1)$. 
By \cite[Proposition 3.5]{SS25}, the norm of the homogeneous components $f_n$ satisfy $\|f_n\| \leq \|f\|$, and the Ces\`{a}ro means of this series also converge. 

Let $E = \cE^*$ be the operator space dual of $\cE$. 
By \cite[Proposition 6.6]{PisierBook} combined with the identification $A(\bD_Q) \cong OA_u(E)$ made in \cite[Section 7]{SS25}, we have that
\begin{equation}\label{eq:E_n_is_Haag}
E_n := \spn\{Z^w : |w|=n\} \subseteq A(\bD_Q)
\end{equation}
is completely isometric to the $n$-fold Haagerup tensor product $E \otimes_h \cdots \otimes_h E$ of $E$ with itself. 
Now by a fundamental property of the Haagerup tensor product of finite dimensional operator spaces \cite[Corollary 5.8]{PisierBook} 
\[
(\cE \otimes_h \cdots \otimes_h \cE)^* = \cE^* \otimes_h \cdots \otimes_h \cE^* = E \otimes_h \cdots \otimes_h E, 
\]
so that $E_n$ is the dual of $n$-fold Haagerup tensor product $\cE \otimes_h \cdots \otimes_h \cE$. 
For every $n$, the tuple $T$ defines an evaluation map $\pi_{n,T} \colon E_n \to B(\cK)$ given by 
\[
\pi_{n,T}(Z^w) = T^w  \,\, \FORAL w \textrm{ with } |w|=n .
\] 
By the assumption $\rho_Q(T)<1$, there is a $t \in (0,1)$ and a $n_0$ such that for all $n \geq n_0$, 
\be\label{eq:maint}
\left \|\sum_{|w|=n} T^w \otimes Q_{w_1} \otimes_h Q_{w_2} \otimes_h \cdots \otimes_h Q_{w_n} \right\| \leq t^n . 
\ee
We claim that \eqref{eq:maint} implies that 
\[
\|\pi_{n,T}\|_{cb} \leq t^n
\]
for all $n \geq n_0$. 
Indeed, recall that there is a completely isometric inclusion 
\[
B(\cK) \otimes_{\min} E^*_n \subseteq CB(E_n, B(\cK)), 
\] 
(see Equation (2.3.2) in \cite{PisierBook}, or \cite[Corollary 5.2]{blecher1991tensor}), in which the tensor 
\begin{equation}\label{eq:afore_mentioned}
\sum_{|w|=n} T^w \otimes Q_{w_1} \otimes_h Q_{w_2} \otimes_h \cdots \otimes_h Q_{w_n}
\end{equation}
is identified with $\pi_{n,T}$. 
To see why this is so, note that the set $\{Q_1, \ldots, Q_d\}$ corresponds to the basis $\{e_1, \ldots, e_d\}$ of $\cE$, which is a dual basis to the basis $\{Z_1, \ldots, Z_d\}$ of $E$. 
Therefore, $\{Q_{w_1} \otimes_h Q_{w_2} \otimes_h \cdots \otimes_h Q_{w_n}\}$ corresponds to the basis dual to $\{Z^w\}$. 
It follows from the identification of $\pi_{n,T}$ with the aforementioned element \eqref{eq:afore_mentioned} of $B(\cK) \otimes \cE \otimes_h \cdots \otimes_h \cE$, together with \eqref{eq:maint}, that $\|\pi_{n,T}\|_{cb} \leq t^n$ for $n \geq n_0$. Therefore, if $P_n \colon A(\bD_Q) \in E_n$ denotes the completely contractive map $f \mapsto f_n$ onto the $n$th homogeneous component of $f$, then the series of maps 
\[
\sum_{n=0}^\infty \pi_{n,T} \circ P_n \colon f \mapsto \sum_{n=0}^\infty \pi_{n,T}(f_n) = \sum_{n=0}^\infty f_n(T)
\]
converges in the completely bounded norm to the evaluation representation $\pi_T \colon f \mapsto f(T)$, which must be completely bounded. In particular, for every $f \in A(\bD_Q)$ the series $\sum_{n=0}^\infty f_n(T)$ converges in norm. 

Now, recall that every completely bounded representation is similar to a completely contractive one \cite[Theorem 9.1]{PaulsenBook}. Thus $\pi_T$ is similar to a completely contractive representation. 
By choosing a sufficiently small $\epsilon$ such that $\rho_\cE((1+\epsilon)T)<1$ and repeating the above argument with $(1+\epsilon)T$ instead of $T$, we obtain that $\pi_{(1+\epsilon)T}$ is similar to a completely contractive evaluation representation, and one sees that this completely contractive representation must be an evaluation $\pi_Y$ for $Y \in B(\cK)^d$ that is jointly similar to $(1+ \epsilon)T$. 
It follows that $\pi_{1,Y}$ is completely contractive, or in other words $\|\sum_j Y_j \otimes Q_j\| \leq 1$. 
We conclude that $T$ is similar to the point $(1+\epsilon)^{-1}Y  \in \bD^{(\infty)}_Q$. 
\end{proof}

\begin{remark}
One can easily show that for a reducible point $X \in \bM^d$ with Holder-Jordan parts $X^{(1)}$, $X^{(2)}, \ldots$, $X^{(m)}$, 
\[
\rho_Q(X) = \max\left\{ \rho_Q(X^{(1)}), \rho_Q(X^{(2)}), \ldots, \rho_Q(X^{(m)}) \right\}.
\]
In particular, if $X$ is a commuting tuple, then the Holder-Jordan parts $X^{(1)}$, $X^{(2)}, \ldots$, $X^{(m)}$ are points in $\bC^d$ that constitute the joint spectrum $\sigma(X)$ of $X_1, \ldots, X_d$, and in this case we see that $\rho_Q(X)<1$ if and only if $\sigma(X) \subseteq \bD_Q(1)$. 
We conclude that the condition for a commuting tuple of matrices to be jointly similar to a point in $\bD_Q$ depends only on the first level $\bD_Q(1)$, in other words: it depends only on the norm structure --- and not on the operator space structure --- of the corresponding operator space $\cE$. 
\end{remark}

We now consider a related spectral radius-like quantity, which might motivate the above-defined spectral radius, and then we compare the two in a couple of examples. 
\begin{definition}\label{def:min_spectral_rad}
For $Q_1, \ldots, Q_d \in B(\cH)^d$ and $T \in B(\cK)^d$, we define the {\em minimal $Q$-spectral radius} of $T$ to be
\begin{equation}\label{eq:spectral_radius_formula}
\rho^{\min}_Q(T) = \lim_{n \to \infty} \left \|\sum_{|w|=n} T_{w_1} \cdots T_{w_n} \otimes Q_{w_1} \otimes Q_{w_2} \otimes \cdots \otimes Q_{w_n} \right\|^{1/n} . 
\end{equation}
\end{definition}

Note that the tensor product in the formula above is the minimal (spatial) tensor product. 
A familiar log-subadditivity argument can be used to show that the above limit exists. 
We shall sometimes write $\rho^{\min}_{\bD_Q}$ for $\rho^{\min}_Q$. 
Note that one always has $\rho^{\min}_Q(T) \leq \rho_Q(T)$ because the Haagerup tensor norm dominates the minimal one.

\begin{example}\label{ex:row_ball_min}
Let $Q$ be as in Example \ref{ex:row_ball}. 
The sum in \eqref{eq:spectral_radius_formula} amounts to a long block operator row vector indexed by all words $w$ of length $n$, where at the slot indexed by $w$ we have the power $T^w = T_{w_1} \cdots T_{w_n}$. 
The norm of this block operator row vector is given by $\| \sum_{|w| = n} T^{w} (T^{w})^* \|$, and so the minimal spectral radius is given by formula \eqref{eq:Pop_spec_rad}. 
Thus, in this case, we recover the notion of joint spectral radius used by Bunce, Popescu, and others. 
Since the Haagerup tensor product of row spaces is isometrically isomorphic to the minimal tensor product of row spaces (see Equation (5.16) in \cite{PisierBook}), we conclude that 
\[
\rho^{\min}_{\fB_d}(T) = \rho_{\fB_d}(T) .
\]
We see that in the case of the row ball, the spectral radius of Definition \ref{def:gen_SpecRad_concrete} coincides with the one described in \eqref{eq:Pop_spec_rad}. 
\end{example}

\begin{example}\label{ex:nc_polydisk_min}
Let $Q$ be as in Example \ref{ex:nc_polydisc}. 
Now the sum in \eqref{eq:spectral_radius_formula} amounts to a large block diagonal operator matrix indexed by words $w$ of length $n$, where at the diagonal slot indexed by $w$ we have the power $T^w = T_{w_1} \cdots T_{w_n}$. 
The norm of this operator is
\[
\max_{|w|=n} \|T^w\|. 
\]
Thus, in this case, $\rho^{\min}_Q(T)$ is equal to the joint spectral radius introduced by Rota and Strang \cite{RS60}, defined by 
\[
\rho_{RS}(T) := \lim_{n \to \infty} \max_{|w| = n} \|T^{w}\|^{1/n}.
\]
The reader is warned that $\rho_{RS}(T)$ is also referred to as ``the joint spectral radius" in parts of the literature. 
In \cite[Proposition 1]{RS60}, Rota and Strang prove that $\rho_{RS}(T) \leq 1$ if and only if there exists an equivalent norm {\em on the algebra generated by $T$} such that every $T_i$ is a contraction. 
However, in \cite{ando1998simultaneous} $d$-tuples of matrices $T$ are exhibited such that $\rho_{RS}(T)<1$ but $T$ is not jointly similar to a tuple of strict contractions. 
By Theorem \ref{thm:gen_SpecRad}, this means that
\[
\rho_{RS}(T) = \rho^{\min}_{\fD_d}(T) \neq \rho_{\fD_d}(T). 
\]
So, in general, Definition \ref{def:gen_SpecRad_concrete} is different from Definition \ref{def:min_spectral_rad}.  
\end{example}

Theorem \ref{thm:gen_SpecRad} has the following immediate corollary, which answers the main question that we raised in this paper. 

\begin{corollary}\label{cor:main_result_abstract}
Let $\cE$ be a finite-dimensional operator space, and let $X \in M_n(\cE)$. 
Then $\rho_\cE(X) < 1$ if and only if there exists $S \in \GL_n $ such that $\|S^{-1}XS\|<1$.
Consequently, 
\[
\rho_\cE(X) = \inf\left\{\|S^{-1}XS\| : S \in \GL_n \right\} .
\]
Moreover, if $X$ is irreducible, then the infimum is attained.
\end{corollary}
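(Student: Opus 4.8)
The plan is to derive this corollary directly from Theorem~\ref{thm:gen_SpecRad} together with Proposition~\ref{prop:rho_Q_rho_E}. First I would fix a basis $Q_1,\ldots,Q_d$ for $\cE \subseteq B(\cH)$ and write $X = \sum_{j=1}^d T_j \otimes Q_j$ for a $d$-tuple $T = (T_1,\ldots,T_d) \in M_n^d$, so that, by the identification \eqref{eq:identify_DQ}, $\|S^{-1}XS\| = \|Q(S^{-1}TS)\|$ and by Proposition~\ref{prop:rho_Q_rho_E} we have $\rho_\cE(X) = \rho_Q(T)$. The equivalence $\rho_\cE(X)<1 \Leftrightarrow \exists S \in \GL_n \text{ with } \|S^{-1}XS\|<1$ then amounts to restricting Theorem~\ref{thm:gen_SpecRad} to the finite-dimensional setting: $\rho_Q(T)<1$ iff $T$ is similar to a point of $\bD_Q^{(\infty)}$. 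The point I need to make is that since $T$ acts on the finite-dimensional space $\bC^n$, any similarity realizing the membership in $\bD_Q^{(\infty)}$ can be taken to be a genuine invertible scalar matrix $S \in \GL_n$, so the abstract similarity of the theorem specializes to $\|S^{-1}XS\| = \|Q(S^{-1}TS)\| < 1$, which is exactly membership of $S^{-1}TS$ in $\bD_Q(n)$.

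Next I would deduce the formula $\rho_\cE(X) = \inf\{\|S^{-1}XS\| : S \in \GL_n\}$. For the inequality $\rho_\cE(X) \leq \inf_S \|S^{-1}XS\|$, I would use that $\rho_Q$ is similarity invariant (it is a genuine spectral radius via $\iota$, hence unchanged under conjugation) together with the inequality $\rho_Q(T) \leq \|Q(T)\|$ coming from the backward direction of the theorem's proof (the Haagerup estimate $\|\sum_{|w|=n} T^w \otimes Q_{w_1}\otimes_h\cdots\otimes_h Q_{w_n}\| \leq \|Q(T)\|^n$). Applying this to $S^{-1}TS$ for each $S$ gives $\rho_\cE(X) = \rho_Q(S^{-1}TS) \leq \|Q(S^{-1}TS)\| = \|S^{-1}XS\|$, and taking the infimum yields the bound. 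For the reverse inequality, I would argue that if $\rho_\cE(X) = r$ then for every $\epsilon>0$ the rescaled tuple $(r+\epsilon)^{-1}T$ has $\rho_Q < 1$, so by the equivalence just proved there is $S \in \GL_n$ with $\|S^{-1}((r+\epsilon)^{-1}X)S\|<1$, i.e.\ $\|S^{-1}XS\| < r+\epsilon$; hence $\inf_S \|S^{-1}XS\| \leq r = \rho_\cE(X)$.

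For the final ``moreover'' clause on attainment when $X$ is irreducible, I would argue that the set $\{\|S^{-1}XS\| : S \in \GL_n\}$ is invariant under scaling $S \mapsto \lambda S$, so the infimum may be sought over $S$ in the compact set of invertibles with, say, $\|S\| = \|S^{-1}\| = 1$ suitably normalized; the main obstacle is that $\GL_n$ is not compact and a minimizing sequence $S_k$ could degenerate (condition number blowing up). The key point is that irreducibility of $X$ — equivalently, of the tuple $T$, meaning the only subspaces invariant under all $T_j$ are trivial — rules out such degeneration: if $S_k^{-1}XS_k$ stayed norm-bounded while the condition number of $S_k$ tended to infinity, a standard compactness and polar-decomposition argument would extract a nontrivial common invariant subspace for the $T_j$ in the limit, contradicting irreducibility. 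Making this degeneration argument precise is where the real work lies; I would normalize so that $S_k^* S_k$ has determinant one, pass to a convergent subsequence of the normalized positive operators $P_k = S_k^* S_k / \|S_k^* S_k\|$, and show that the range or kernel of the limiting positive operator furnishes a proper common invariant subspace unless the $S_k$ remain bounded away from the boundary of $\GL_n$, in which case a convergent subsequence produces a minimizer $S$ attaining the infimum.
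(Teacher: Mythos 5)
Your proof is correct, and the first two parts (the equivalence via Theorem~\ref{thm:gen_SpecRad} and Proposition~\ref{prop:rho_Q_rho_E}, and the scaling argument for the infimum formula) coincide with the paper's proof. For the attainment of the infimum when $X$ is irreducible, however, you take a genuinely different route: the paper invokes the Artin--Voigt lemma to conclude that the similarity orbit of an irreducible tuple is an algebraic variety, hence closed, so that its intersection with the closed $\cE$-ball of radius $\|X\|$ is compact and the norm attains its minimum there; you instead propose a direct analytic degeneration argument. Your sketch is completable and in fact quite clean once organized as follows: by the left polar decomposition $S = PV$ with $P = (SS^*)^{1/2}$ and $V$ unitary, one has $\|S^{-1}XS\| = \|P^{-1}XP\|$ (scalar unitary conjugation preserves the norm on $M_n(\cE)$), so a minimizing sequence may be taken positive with $\|S_k\|=1$; if $S_k \to S_0$ with $S_0$ singular while $Y_k := S_k^{-1}TS_k$ stays bounded and converges to $Y$, then passing to the limit in $T_jS_k = S_kY_{k,j}$ gives $T_jS_0 = S_0Y_j$, so $\operatorname{ran}(S_0)$ is a nonzero proper common invariant subspace, contradicting irreducibility; hence $S_0$ is invertible and attains the infimum. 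The trade-off is that the paper's argument is a one-line appeal to a known algebraic fact (closedness of orbits of semisimple representations), whereas yours is elementary and self-contained but requires you to actually carry out the limit argument you flagged as ``the real work'' --- which, as indicated, does go through.
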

\begin{proof}
The first claim follows directly from Theorem \ref{thm:gen_SpecRad} and Proposition \ref{prop:rho_Q_rho_E}. 
The equality then follows from a scaling argument: if $\rho_\cE(X)=r$, consider $X_\epsilon = (r+\epsilon)^{-1}X$. 
Then $\rho_\cE(X_\epsilon)<1$, so there exists $S$ such that $\|S^{-1}X_\epsilon S\|<1$, whence $\|S^{-1}X S\|<r+\epsilon$. 
On the other hand, if $S$ is invertible then $\rho_\cE(X) = \rho_\cE(S^{-1}XS) \leq \|S^{-1} XS\|$.

Now fix an irreducible $X \in M_n^d$. Then, $\rho_{\cE}(X) \leq \|X\|$. By the Artin-Voigt lemma \cite[Corollary 1.3]{Gabriel-fin_rep_type}, the similarity orbit of $X$ is an algebraic variety. Hence, in particular, the similarity orbit of $X$ intersected with the closed $\cE$-ball in $M_n^d$ of radius $\|X\|$ is compact. Hence, the norm attains its minimum on the orbit.
\end{proof}

As the example in Remark \ref{rem:matrix} shows, the infimum need not be attained if $X$ is reducible.

\section{Application to noncommutative rational functions}\label{sec:nc_rat}

\subsection{Background on noncommutative rational functions}

We recall the basics of the theory of nc rational functions. Nc rational functions are elements of the free skew field. The latter is the universal skew field of fractions of the free algebra defined by Cohn and Amitsur. In our description, we will follow \cite{KVV-sing_nc_rational} and \cite{Volcic-rat}. An nc rational expression in $d$ nc variables is any syntactically valid expression that one can write using addition, multiplication, and inverse in nc polynomials in $d$ nc variables. For example, if we use two nc variables:
\[
(xy - yx)^{-1},\, xy^{-1}x^{-1} y + 7(1-xyx)^{-1},\, (1 - x y y^{-1} x^{-1})^{-1}.
\]
We say that an expression is admissible if there is a $d$-tuple of matrices on which one can evaluate the expression. Namely, every expression that we invert is invertible for this $d$-tuple. Clearly, the first two expressions in our example are admissible, but the last one is not. For an admissible expression, we define its domain to be the set of $d$-tuples of matrices of all sizes on which we can evaluate the expression. Plugging in matrices of commuting variables (generic matrices), we can see that if the domain of an admissible expression is non-empty on level $n$, then it is a Zariski open dense set. In particular, the domains of every pair of admissible nc rational expressions intersect. We say that two nc rational expressions are equivalent if they evaluate to the same matrix on every $d$-tuple of matrices in the intersection of their domains. Finally, an nc rational function is an equivalence class of admissible nc rational expressions. The domain of an nc rational function is the set of all $d$-tuples on which we can evaluate at least one of the relations in our equivalence class. We will denote the domain of an nc rational function $f$  by $\dom(f)$.

This construction can be simplified if we assume that our nc rational function has $0$ in its domain. This will be our assumption from now on. Let $f$ be an nc rational function with $0 \in \dom(f)$. Then, there exist matrices $A_1,\ldots, A_d \in M_n(\C)$ and vectors $b, c \in \C^n$, such that for all points $X \in \bM^d$ for which the pencil $I - \sum_{j=1}^d X_j \otimes A_j$ is invertible, we have
\[
f(X) = (I \otimes c)^* \left(I - \sum_{j=1}^d X_j \otimes A_j \right)^{-1} (I \otimes b).
\]
This is called a (descriptor) realization of $f$. We will say that $(A,b,c)$ is a realization of $f$. It is clear that if we replace the tuple $(A_1,\ldots,A_d)$ by a similar tuple $(S^{-1} A_1 S,\ldots, S^{-1} A_d S)$ and the vectors by $S^{-1} b$ and $S^* c$, respectively, then the above formula will not change. By \cite{KVV-sing_nc_rational} and \cite{Volcic-rat}, there exists a tuple of minimal size, such that $(A,b,c)$ is a realization of $f$. All such minimal realizations are similar. Moreover, if $(A,b,c)$ is a minimal realization, then
\begin{equation}\label{eq:domf}
\dom(f) = \left\{X \in \bM^d \mid \det \left(I - \sum_{j=1}^d X_j \otimes A_j \right) \neq 0 \right\}.
\end{equation}
There is also a slightly different form of a realization of an nc rational function, called a Fornasini-Marchesini (FM for short) realization, which has the form
\[
f(X) = d I + (c \otimes I)^* \left(I - \sum_{j=1}^d X_j \otimes A_j \right)^{-1} \left( \sum_{j=1}^d X_j \otimes b_j \right).
\]
In many cases, the FM realization is more convenient for function-theoretic purposes. An equivalent result for domains of nc rational functions holds for the FM realization \cite[Lemma 2.1]{MartinS-rational_cuntz}. The same realization theory applies to matrix-valued rational nc functions.

We shall require the following lemma. 

\begin{lemma} \label{lem:bounded_inverse_pencil}
Let $A_0, A_1, \ldots, A_d \in M_n$. Let $A_0 + \sum_{j=1}^d z_j A_j$ be a linear pencil. Assume that $\det\left( I \otimes A_0 + \sum_{j=1}^d X_j \otimes A_j\right) \neq 0$, for every $X \in \overline{\bD_Q}$. Then the pencil is invertible in $A(\bD_Q)$. 
\end{lemma}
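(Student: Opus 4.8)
The plan is to reduce the statement to the single strict inequality $\rho_{\cE^*}(A)<1$ for the coefficient tuple $A=(A_1,\dots,A_d)$, and then to build the inverse by a Neumann series inside the operator algebra. First I would normalize the constant term: evaluating the hypothesis at $X=0\in\ol{\bD_Q}$ gives $\det A_0\neq 0$, so $A_0\in\GL_n$, and left-multiplying the pencil by the invertible constant $A_0^{-1}$ affects neither invertibility in $A(\bD_Q)$ nor the nonvanishing of the determinants. Hence I may assume $A_0=I$ and write the pencil as $I-\sum_j z_jA_j$. Setting $W=\sum_j A_j\otimes Z_j$, where $Z_1,\dots,Z_d\in A(\bD_Q)$ are the coordinate nc functions, the goal becomes that $I-W$ be invertible in $M_n(A(\bD_Q))$. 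Under the completely isometric inclusion $A(\bD_Q)=OA_u(\cE^*)\subseteq \Cmax(\cE^*)$, the element $W$ is precisely $\iota(A)$ for $A\in M_n(\cE^*)$, so by Definition \ref{def:gen_SpecRad} its spectral radius in the C*-algebra $M_n(\Cmax(\cE^*))$ equals $\rho_{\cE^*}(A)$.

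Granting $\rho_{\cE^*}(A)<1$, I would finish as follows. The Neumann series $\sum_{k\ge 0}W^k$ then converges in norm in $M_n(\Cmax(\cE^*))$; since every partial sum lies in the closed subalgebra $M_n(A(\bD_Q))$, so does the limit $(I-W)^{-1}$, which is the desired inverse of the pencil in $A(\bD_Q)$. In the function-theoretic picture the same series exhibits $g(X):=(I-W(X))^{-1}$ as a bounded nc function, and uniform continuity is automatic from $g(X)-g(Y)=g(X)(W(Y)-W(X))g(Y)$ together with the Lipschitz bound on the linear map $W$. Thus everything reduces to proving $\rho_{\cE^*}(A)<1$.

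To attack that estimate I would first unwind the hypothesis. For finite $X\in\ol{\bD_Q}$ the point evaluation $\pi_X$ is a representation of $\Cmax(\cE^*)$ with $\pi_X(W)=\sum_j X_j\otimes A_j=:W(X)$, and $\det(I-W(X))\neq 0$ is exactly $1\notin\sigma(W(X))$. Since $cX\in\ol{\bD_Q}$ whenever $|c|\le 1$, a complex-scaling argument upgrades this to $\spr(W(X))<1$ for \emph{every} finite $X\in\ol{\bD_Q}$: an eigenvalue $\lambda$ of $W(X)$ with $|\lambda|\ge 1$ would make $c=1/\lambda$ produce $1\in\sigma(W(cX))$, contradicting the hypothesis. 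Dually, the unimodular gauge maps $Z_j\mapsto e^{i\theta}Z_j$ are complete isometries of $\cE^*$ and hence extend to $*$-automorphisms of $\Cmax(\cE^*)$ sending $W$ to $e^{i\theta}W$; consequently the spectrum of $W$ in the C*-cover is rotation-invariant, and excluding $1$ from it is equivalent to the strict bound $\rho_{\cE^*}(A)=\spr(W)<1$.

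The main obstacle is precisely this last passage: converting the pointwise bounds $\spr(W(X))<1$, valid on each (compact) level $\ol{\bD_Q}(m)$, into the single strict inequality $\rho_{\cE^*}(A)<1$, which is a spectral-radius statement in the full C*-cover and therefore also involves infinite-dimensional representations. My plan for it is a compression argument: a point of maximal modulus in $\sigma(W)$ lies on the boundary, hence in the approximate point spectrum, and its approximate eigenvectors can be compressed to finite-dimensional subspaces; as compressions of points of $\ol{\bD_Q}^{(\infty)}$ remain in $\ol{\bD_Q}$, this bounds $\spr(W)$ by the finite-level spectral radii. The delicate part — where I expect the real difficulty to lie — is making this bound \emph{strict}, i.e.\ ruling out that the finite-level spectral radii accumulate at $1$. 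This is a genuine uniformity across matrix levels that the compactness of the individual levels does not supply by itself, and closing it is where the operator-space (Haagerup) structure underlying $\rho_{\cE^*}$, together with the similarity characterization of Corollary \ref{cor:main_result_abstract} and Theorem \ref{thm:gen_SpecRad}, must be brought to bear.
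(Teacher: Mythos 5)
There is a genuine gap, and you have flagged it yourself: everything is deferred to the single claim $\rho_{\cE^*}(A)<1$, and that claim is never proved. Note that this claim is not an easier intermediate step but is essentially equivalent to the lemma itself: granting the lemma (applied to the scaled pencils $\lambda I-\sum z_jA_j$ for $|\lambda|\ge 1$) one gets $\sigma_{M_n(A(\bD_Q))}(W)\subseteq\bD$ and hence, by compactness of the spectrum, $\rho_{\cE^*}(A)<1$; this is exactly the route the paper takes \emph{from} Lemma \ref{lem:bounded_inverse_pencil} to Corollary \ref{cor:rational_spectrum} and the converse half of Theorem \ref{thm:bdd_rat_func}. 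So your reduction does not make progress, and the one concrete mechanism you propose for closing it is flawed as stated: for a non-normal matrix $M$, a unit vector $v$ with $\|(M-\lambda)v\|\le\epsilon$ only forces $\|(M-\lambda)^{-1}\|\ge 1/\epsilon$; it does \emph{not} place $\lambda$ within $\epsilon$ of $\sigma(M)$. Hence compressing approximate eigenvectors does not, by itself, ``bound $\spr(W)$ by the finite-level spectral radii,'' and the appeal to Theorem \ref{thm:gen_SpecRad} and Corollary \ref{cor:main_result_abstract} is a red herring --- they play no role here.

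What actually closes the argument (and is the paper's proof) is to bypass spectra entirely and show directly that $g(X)=\bigl(I\otimes A_0+\sum_j X_j\otimes A_j\bigr)^{-1}$ is bounded on $\ol{\bD_Q}$. If $\|g(X^k)\|\to\infty$, pick unit vectors $v_k$ with $\|(I\otimes A_0+\sum_j X^k_j\otimes A_j)v_k\|\to 0$ and compress $X^k$ to the span of the $n$ components of $v_k$ together with their images under the $X^k_j$. The decisive point is that these compressions all land in the \emph{single} level $N=n(d+1)$, whose closed ball $\ol{\bD_Q(N)}$ is compact; passing to a subsequence, the compressed points converge to some $Y^0\in\ol{\bD_Q(N)}$ and the compressed vectors to an exact null vector of the pencil at $Y^0$, contradicting the hypothesis. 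This is precisely what dissolves your worry about finite-level data ``accumulating at $1$'': no uniformity across all levels is needed, only compactness of one fixed level. (Your normalization $A_0=I$ is harmless but also unnecessary; and once boundedness is known, uniform continuity on $\ol{\bD_Q}$ follows from the resolvent identity, which is all that membership in $A(\bD_Q)$ requires. The Neumann-series packaging via $\rho_{\cE^*}(A)<1$ is then a consequence of the lemma, not a proof of it.)
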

\begin{proof}
Let $g(X) = (I \otimes A_0 + \sum_{j=1}^d X_j \otimes A_j)^{-1}$. Clearly, $g$ is an $M_n$-valued nc function as a composition of nc functions. We first show that $g$ is bounded on $\overline{\bD_Q}$. If this is not the case, then there is a sequence $X^k \in \overline{\bD_Q(m_k)}$ such that $\|g(X^k)\| \to \infty$. 
We show this is impossible. 

Suppose first that the dimensions $m_k$ are bounded, say $m_k \leq N$ for all $k$; 
we may assume in fact that $m_k = N$ for all $k$. 
But then by the compactness of the $N$-th level $\ol{\bD_Q(N)}$ we may, by passing to a subsequence, assume that $X^k \to X^0 \in \ol{\bD_Q(N)}$. 
But by assumption, $I \otimes A_0 + \sum_{j=1}^d X^0_j \otimes A_j$ is invertible, and by continuity of the inverse $g(X^k) \to g(X^0)$, contradicting the assumption that $\|g(X^k)\| \to \infty$.

We now show that if $m_k \to \infty$, then one can find $N \in \bN$ and $Y^k \in \overline{\bD_Q(N)}$ such that $\|g(Y^k)\| \to \infty$. 
This will contradict what we demonstrated in the previous paragraph and will complete the proof that $g$ is bounded on $\overline{\bD_Q}$. 

Now, $\|g(X^k)\| \to \infty$ if and only if there is a sequence $v_k \in \bC^{m_k} \otimes \bC^n$ of unit vectors such that 
\[
\left\|(I \otimes A_0 + \sum_{j=1}^d X^k_j \otimes A_j)v_k \right\| \to 0 .
\]
Letting $e_1, \ldots, e_n$ denote the standard basis of $\bC^n$, we can write $v_k$ as $v_k = \sum_{\ell=1}^n u^k_\ell \otimes e_\ell$, for some vectors $u^k_\ell \in \bC^{m_k}$. 
Let $P_k$ be the orthogonal projection of $\bC^{m_k}$ onto the subspace
\[
V_k = \spn\left(\left\{u^k_\ell : \ell=1,\ldots, n \right\} \bigcup \left\{ X_j^k u^k_\ell : \ell=1,\ldots, n ; j = 1, \ldots, d\right\} \right). 
\]
Clearly, $\dim(V_k) \leq N:= n \times (d+1)$. 
We may as well assume that $\dim(V_k) = N$. 
We note that $w_k = (P_k \otimes I_n) v_k = v_k$ is still a unit vector, and if we let $Y^k = P_k X^k P_k$, then $Y^k$ can be considered as a tuple in $\ol{\bD_Q(N)}$ and 
\[
\left\|(I \otimes A_0 + \sum_{j=1}^d Y^k_j \otimes A_j)w_k \right\| = \left\|(I \otimes A_0 + \sum_{j=1}^d X^k_j \otimes A_j)v_k \right\| \to 0 .
\]
So, we have found a sequence $Y^k \in \ol{\bD_Q(N)}$ such that $\|g(Y^k)\| \to \infty$, which is impossible. 

To see that $g$ is uniformly continuous in $\ol{\bD_Q}$, we use the resolvent identity. Namely, for $X, Y \in \bD_Q(m)$, 
\[
\left\|g(X) - g(Y)\right\| = \left\|g(X) \left( \sum_{j=1}^d (Y_j - X_j) \otimes A_j \right) g(Y) \right\|
\]
Let $E$ be the operator space structure on $\C^d$, such that $\overline{\bD_Q}$ is the closed operator unit ball of $E$. Then the linear map $\varphi_A(X) = \sum_{j=1}^d X_j \otimes A_j$ is a completely bounded map from $E$ to $M_n$. Since $g$ is bounded on $\overline{\bD_Q}$, we get that
\[
\left\|g(X) - g(Y) \right\| \leq \|g\|_{\infty, \overline{\bD_Q}}^2 \|\varphi_A\|_{cb} \|X - Y\|_E
\]
We conclude that $g$ is uniformly Lipschitz on $\overline{\bD_Q}$ and, thus, uniformly continuous.
\end{proof}

The following immediate corollary of the above lemma shows that a rational function that is defined on an nc ball $\bD_Q$ is bounded on every nc ball $r\bD_Q$ of radius $r<1$. 
It is significant and somewhat surprising that this statement is false for general nc functions (see \cite{Pascoe20}).

\begin{corollary}\label{cor:bounded_rational}
Let $\bD_Q \subset \bM^d$ be an nc operator ball, let $A \in M_n^d$, and suppose that 
\[
\bD_Q \subset \left\{X \in \bM^d \mid \det \left(I - \sum_{j=1}^d X_j \otimes A_j \right) \neq 0 \right\}. 
\]
Then $g(X) = (I - \sum X_j \otimes A_j)^{-1}$ is a $M_n$-valued nc function on $\bD_Q$ which is bounded on $r\bD_Q$ for all $r<1$. 
Consequently, $g$ is uniformly continuous on $r\bD_Q$ for all $r<1$. 
\end{corollary}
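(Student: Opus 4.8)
The plan is to deduce the corollary from Lemma \ref{lem:bounded_inverse_pencil} by passing to a rescaled operator ball. First I would record that $g$ is a well-defined $M_n$-valued nc function on all of $\bD_Q$: it is the composition of the affine nc map $X \mapsto I - \sum_{j=1}^d X_j \otimes A_j$ with matrix inversion, and by the standing hypothesis $\det(I - \sum_j X_j \otimes A_j) \neq 0$ at every $X \in \bD_Q$, so the inverse exists throughout $\bD_Q$. Both operations respect grading, direct sums, and similarities, so $g$ is an nc function, exactly as already observed in the proof of Lemma \ref{lem:bounded_inverse_pencil}. This settles the first assertion, and it remains to establish boundedness and uniform continuity on each $r\bD_Q$ with $r<1$.

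The key geometric observation is that $\overline{r\bD_Q}$ sits inside the \emph{open} ball $\bD_Q$. Since $Q$ is linear, $r\bD_Q = \{X \in \bM^d : \|Q(X)\| < r\}$, whose closure is contained in $\{X : \|Q(X)\| \leq r\}$ by continuity of the level-wise norm $X \mapsto \|Q(X)\|$; as $r<1$ this is contained in $\{X : \|Q(X)\| < 1\} = \bD_Q$. Moreover $r\bD_Q$ is itself an nc operator ball: setting $Q^{(r)}_j = r^{-1} Q_j$ one has $\bD_{Q^{(r)}} = r\bD_Q$, and hence $\overline{\bD_{Q^{(r)}}} = \overline{r\bD_Q} \subset \bD_Q$.

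With this in hand I would apply Lemma \ref{lem:bounded_inverse_pencil} to the ball $\bD_{Q^{(r)}}$, taking $A_0 = I$ and linear coefficients $-A_1, \ldots, -A_d$. The lemma's hypothesis requires $\det(I \otimes I - \sum_j X_j \otimes A_j) \neq 0$ for every $X \in \overline{\bD_{Q^{(r)}}}$; but $\overline{\bD_{Q^{(r)}}} = \overline{r\bD_Q} \subset \bD_Q$, so this invertibility is precisely what the corollary's assumption grants. The lemma then yields that $g$ is bounded and uniformly continuous on $\overline{\bD_{Q^{(r)}}} = \overline{r\bD_Q}$, hence a fortiori on $r\bD_Q$. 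Since $r<1$ was arbitrary, both remaining claims follow.

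There is no serious obstacle here --- the corollary is essentially a repackaging of the lemma --- and the only points demanding care are bookkeeping: recognizing the shrunken ball $r\bD_Q$ as the unit ball of the rescaled operator space $\spn\{r^{-1}Q_1, \ldots, r^{-1}Q_d\}$, so that the lemma applies verbatim after the rescaling, and matching the sign convention between the corollary's pencil $I - \sum_j X_j \otimes A_j$ and the lemma's pencil $A_0 + \sum_j z_j A_j$ by absorbing the sign into the linear coefficients.
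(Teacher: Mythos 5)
Your proof is correct and follows essentially the same route as the paper: the paper's own argument likewise observes that for $0<r<1$ the closed ball $\overline{r\bD_Q}$ lies inside $\bD_Q$, identifies $r\bD_Q$ as the unit ball of a rescaled pencil, and invokes Lemma \ref{lem:bounded_inverse_pencil} to conclude invertibility in $A(r\bD_Q)$. Your extra bookkeeping (writing $r\bD_Q=\bD_{Q^{(r)}}$ with $Q^{(r)}_j=r^{-1}Q_j$) is the right normalization and just makes explicit what the paper's one-line proof leaves implicit.
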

\begin{proof}
For every $0 < r < 1$, the pencil $I - \sum X_j \otimes A_j$ is invertible on $r \overline{\bD_Q} = \overline{\bD_{rQ}}$. Thus, the pencil is invertible in $A(r \bD_Q)$ by the preceding lemma.
\end{proof}

\begin{corollary} \label{cor:rational_spectrum}
Let $f \in M_k(A(\bD_Q))$ be an nc rational function. Then,
\[
\sigma_{A(\bD_Q)}(f) = \bigcup_{X \in \ol{\bD_Q}} \sigma(f(X)).
\]
\end{corollary}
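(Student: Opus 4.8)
The statement asserts that, for $f \in M_k(A(\bD_Q))$ an nc rational function, the spectrum $\sigma_{A(\bD_Q)}(f)$ (computed in the Banach algebra $M_k(A(\bD_Q))$) equals the union of the pointwise spectra over the closed ball. The plan is to prove the two inclusions separately. The inclusion $\bigcup_{X \in \ol{\bD_Q}} \sigma(f(X)) \subseteq \sigma_{A(\bD_Q)}(f)$ is the soft one. For each $X \in \ol{\bD_Q}$ with $X \in M_n^d$, the point evaluation $\pi_X \colon A(\bD_Q) \to M_n$ is a unital homomorphism, and its amplification $\id_{M_k} \otimes \pi_X \colon M_k(A(\bD_Q)) \to M_{kn}$ is a unital homomorphism sending $f$ to $f(X)$. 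Since a unital homomorphism carries invertibles to invertibles, one has $\sigma(f(X)) \subseteq \sigma_{A(\bD_Q)}(f)$; taking the union over $X \in \ol{\bD_Q}$ gives the claimed inclusion.

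For the reverse inclusion I would argue by contraposition: assuming $\lambda \notin \bigcup_{X \in \ol{\bD_Q}} \sigma(f(X))$, I would show that $\lambda I_k - f$ is invertible in $M_k(A(\bD_Q))$. The hypothesis says precisely that $\lambda I_k - f(X)$ is invertible for every $X \in \ol{\bD_Q}$, so the pointwise inverse $g(X) := (\lambda I_k - f(X))^{-1}$ is defined on all of $\ol{\bD_Q}$; in particular $g(0)$ exists since $0 \in \ol{\bD_Q}$. As the inverse of the matrix-valued nc rational function $\lambda I_k - f$, the function $g$ is again $M_k$-valued nc rational with $0 \in \dom(g)$. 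I would then invoke the realization theory for matrix-valued nc rational functions recalled above to write a descriptor realization of $g$ with a linear pencil $L(X) = I - \sum_{j=1}^d X_j \otimes \tilde A_j$, for which $\dom(g) = \{X : \det L(X) \neq 0\}$. Since $\ol{\bD_Q} \subseteq \dom(g)$, we get $\det L(X) \neq 0$ for all $X \in \ol{\bD_Q}$.

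At this point Lemma \ref{lem:bounded_inverse_pencil}, applied with $A_0 = I$, shows that $L$ is invertible in the matrices over $A(\bD_Q)$, i.e. $L^{-1}$ is a bounded, uniformly continuous operator-valued nc function on $\bD_Q$. Reassembling $g$ from its realization, as a finite combination of $L^{-1}$ with constant matrices, then exhibits $g$ as an element of $M_k(A(\bD_Q))$. Finally, because multiplication in $M_k(A(\bD_Q))$ is pointwise and $g(X)(\lambda I_k - f(X)) = (\lambda I_k - f(X))g(X) = I$ for every $X \in \bD_Q$, the element $g$ is a genuine two-sided inverse of $\lambda I_k - f$ in $M_k(A(\bD_Q))$. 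Hence $\lambda \notin \sigma_{A(\bD_Q)}(f)$, completing the reverse inclusion.

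The main obstacle is the reverse inclusion, and specifically the passage from ``the pointwise inverse exists on $\ol{\bD_Q}$'' to ``the pointwise inverse belongs to $M_k(A(\bD_Q))$,'' i.e. is bounded and uniformly continuous rather than merely defined. The entire purpose of introducing the descriptor realization is to reduce this nonlinear inversion to the invertibility of a single linear pencil on $\ol{\bD_Q}$, which is exactly the situation that Lemma \ref{lem:bounded_inverse_pencil} is designed to control; I expect the only care required is in checking that the realization of $g$ indeed has its domain governed by $\det L(X)\neq 0$ on $\ol{\bD_Q}$.
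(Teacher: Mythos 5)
Your proposal is correct and follows essentially the same route as the paper: the easy inclusion via point evaluations being unital homomorphisms, and the reverse inclusion by observing that the pointwise inverse is again nc rational with $0$ in its domain, so that a minimal realization places the question in the hands of Lemma \ref{lem:bounded_inverse_pencil} applied to its pencil. The only cosmetic difference is that the paper reduces to the case $\lambda=0$ and cites the explicit rank-one-perturbed pencil $I-\sum_j A_j^{\times}\otimes Z_j$ for the minimal FM realization of $f^{-1}$, whereas you invoke the domain formula \eqref{eq:domf} for a generic minimal realization of $(\lambda I_k - f)^{-1}$; both arguments are sound.
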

\begin{proof}
Let $f \in M_k(A(\bD_Q))$ be nc rational. It suffices to show that $f$ is invertible if and only if $f(X)$ is invertible for every $X \in \overline{\bD_Q}$. One direction is trivial, so assume $f(X)$ is invertible for every $X \in \overline{\bD_Q}$. In particular, $f(0) \neq 0$. Hence, by \cite{KVV-sing_nc_rational}, if $f$ has minimal FM realization
\[
f(Z) = f(0) \otimes I + (C^* \otimes I) \left( I - \sum_{j=1}^d A_j \otimes Z_j\right)^{-1} \left( \sum_{j=1}^d B_j \otimes Z_j \right),
\]
then $f^{-1}$ has a minimal FM realization with pencil $I - \sum_{j=1}^d A_j^{\times} \otimes Z_j$. Here $A_j^{\times}$ is a rank one perturbation of the $A_j$. However, since $f^{-1}(X)$ is defined for every $X \in \overline{\bD_Q}$, we have that the pencil is invertible at every point of $\overline{\bD_Q}$. By Lemma \ref{lem:bounded_inverse_pencil}, we have that $f^{-1} \in M_k(A(\bD_Q))$ and we are done.
\end{proof}

\subsection{The spectral radius and the domain of nc rational functions}
Let $\cE = \bC^d$ equipped with an operator space structure, and let $E = \cE^*$ denote its dual. The following is a partial extension of \cite[Theorem A]{JMS-rat}.

\begin{theorem} \label{thm:bdd_rat_func}
Let $f$ be an nc rational function with $0$ in its domain. Let $(A,b,c)$ be a minimal realization of $f$, and put $r=\rho_{\bB_E}(A)$. Then $(1/r)\bB_\cE \subset \dom(f)$. Conversely, if $R\bB_\cE \subset \dom(f)$, then $r\leq 1/R$. 
In particular, $\rho_{\bB_E}(A)<1$ if and only if $\dom(f)$ contains $R \bB_\cE$ for some $R > 1$, in which case $f \in A(\bB_{\cE})$. 
\end{theorem}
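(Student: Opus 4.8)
The plan is to reduce the whole statement to a single norm identity comparing the homogeneous components of the Neumann series of the resolvent pencil with the dual Haagerup norms that compute $\rho_{\bB_E}(A)$. Fix the basis $Q_1,\dots,Q_d$ of $\cE$ and the dual basis $Z_1,\dots,Z_d$ of $E=\cE^*$, so that the nc coordinate functions on $\bB_\cE=\bD_Q$ are the $Z_j$ under the identification $A(\bB_\cE)\cong OA_u(E)$. Write $\|X\|_\cE:=\|\sum_j X_j\otimes Q_j\|$ for the $\cE$-norm of a tuple. Since $(A,b,c)$ is minimal, \eqref{eq:domf} identifies $\dom(f)$ with the locus where the pencil $I-\sum_j X_j\otimes A_j$ is invertible, so it suffices to control that locus. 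Expanding the resolvent, its $k$-th homogeneous component is the nc polynomial $g_k(X)=\sum_{|w|=k}X^w\otimes A^w=(\sum_j X_j\otimes A_j)^k$.

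The engine is the identity
\[
\sup_{X\in\bB_\cE}\|g_k(X)\|\;=\;\Bigl\|\sum_{|w|=k}A^w\otimes Z_{w_1}\otimes_h\cdots\otimes_h Z_{w_k}\Bigr\|_{M_n(E^{\otimes_h k})}\;=:\;\beta_k,
\]
together with $\beta_k^{1/k}\to\rho_{\bB_E}(A)=r$. Indeed, exactly as in the proof of Theorem \ref{thm:gen_SpecRad}, the span $E_k=\spn\{Z^w:|w|=k\}\subseteq A(\bB_\cE)$ is completely isometric to the $k$-fold Haagerup power $E^{\otimes_h k}$ (see \eqref{eq:E_n_is_Haag}), with $Z^w$ sent to $Z_{w_1}\otimes_h\cdots\otimes_h Z_{w_k}$; hence the matrix-valued monomial $g_k=\sum_{|w|=k}A^w\otimes Z^w$ is carried to $b_k:=\sum_{|w|=k}A^w\otimes Z_{w_1}\otimes_h\cdots\otimes_h Z_{w_k}$. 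Since the norm of a bounded matrix-valued nc function on $\bB_\cE$ is its supremum over the ball, $\sup_{X\in\bB_\cE}\|g_k(X)\|=\|g_k\|_{M_n(E_k)}=\|b_k\|$, and $\|b_k\|^{1/k}\to\rho_{\bB_E}(A)$ is Definition \ref{def:gen_SpecRad_concrete}. Establishing this identity—correctly matching the dual bases and the two complete isometries $E_k\cong E^{\otimes_h k}$ and $(\cE^{\otimes_h k})^*=E^{\otimes_h k}$—is the main technical point; the rest is soft.

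For the forward statement $(1/r)\bB_\cE\subset\dom(f)$, take $X$ with $\|X\|_\cE<1/r$. Homogeneity gives $\|g_k(X)\|\le\beta_k\|X\|_\cE^{k}$, so $\|g_k(X)\|^{1/k}\le\beta_k^{1/k}\|X\|_\cE\to r\|X\|_\cE<1$. By the root test the series $\sum_k g_k(X)$ converges absolutely, and telescoping shows its sum inverts $I-\sum_j X_j\otimes A_j$; thus $X\in\dom(f)$ by \eqref{eq:domf}. (When $r=0$ this applies to every $X$.)

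For the converse, suppose $R\bB_\cE\subset\dom(f)$ and fix $r'<R$. Then the pencil is invertible on $\overline{r'\bB_\cE}$, so by Lemma \ref{lem:bounded_inverse_pencil} the resolvent $g(X)=(I-\sum_j X_j\otimes A_j)^{-1}$ is a bounded nc function on $r'\bB_\cE$, say $\|g\|_{r'\bB_\cE}=M<\infty$. Proposition \ref{prop:series} gives $\|g_k\|_{r'\bB_\cE}\le M$, while homogeneity yields $\|g_k\|_{r'\bB_\cE}=(r')^{k}\beta_k$; hence $(r')^{k}\beta_k\le M$, so $\beta_k^{1/k}\le M^{1/k}/r'\to 1/r'$ and $r\le 1/r'$. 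Letting $r'\uparrow R$ gives $r\le 1/R$. Finally, for the ``in particular'': if $r<1$ the forward part gives $(1/r)\bB_\cE\subset\dom(f)$ with $1/r>1$; as then the pencil is invertible on $\overline{\bB_\cE}$, Lemma \ref{lem:bounded_inverse_pencil} places $g$ in $M_n(A(\bB_\cE))$, whence $f(X)=(I\otimes c)^*g(X)(I\otimes b)\in A(\bB_\cE)$. Conversely, if $R\bB_\cE\subset\dom(f)$ for some $R>1$, the converse part forces $r\le 1/R<1$.
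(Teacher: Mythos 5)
Your proposal is correct. The converse half and the ``in particular'' clause follow essentially the paper's own route: invertibility of the pencil on $\overline{r'\bB_\cE}$ via Lemma \ref{lem:bounded_inverse_pencil} (equivalently Corollary \ref{cor:bounded_rational}), the bound $\|g_k\|\leq\|g\|$ on homogeneous components from Proposition \ref{prop:series}, and the identification \eqref{eq:E_n_is_Haag} of $\spn\{Z^w:|w|=k\}$ with the $k$-fold Haagerup power of $E$, which is exactly how the paper converts the Neumann coefficients into $\rho_{\bB_E}(A)$. Where you genuinely diverge is the forward inclusion $(1/r)\bB_\cE\subset\dom(f)$: the paper invokes Corollary \ref{cor:main_result_abstract} to replace $(A,b,c)$ by a similar realization with $\|A\|_E<r+\epsilon$ and then gets invertibility of the pencil from the one-step complete-boundedness estimate $\|\sum_j A_j\otimes X_j\|\leq\|A\|_E\|X\|_\cE<1$; you instead keep the realization fixed and sum the Neumann series directly, using homogeneity and the identity $\sup_{X\in\bB_\cE}\|g_k(X)\|=\beta_k$ together with the root test. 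Your route has the advantage of not depending on the similarity theorem (Theorem \ref{thm:gen_SpecRad} and Paulsen's similarity result behind Corollary \ref{cor:main_result_abstract}), so this half of the statement is seen to be more elementary than the paper suggests; the paper's route buys the extra structural information that the realization can be normalized so that $\|A\|_E<r+\epsilon$, which is not needed for the theorem itself. Both arguments are sound, and your handling of the degenerate case $r=0$ and of the passage from $\dom(f)$ to invertibility of the pencil via minimality and \eqref{eq:domf} is in order.
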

\begin{proof}
Set $r = \rho_{\bB_E}(A)$, and let $\epsilon > 0$. 
By Corollary \ref{cor:main_result_abstract}, there exists $S \in \GL_n$, such that $\|S^{-1} A S\|_E < r + \epsilon$. 
Replacing $A$ by $S^{-1} A S$, $b$ by $S b$, and $c$ by $S^{-1 *} c$, we can assume that we have a minimal realization with $\|A\|_E < r + \epsilon$. Let $X \in (r+\epsilon)^{-1} \bB_{\cE}$. Denote by $\varphi_A$ the map on $\cE$ corresponding to $A \in M_m(E) = M_m(\cE^*)$. Since $\|\varphi_A\|_{cb} = \|A\|_E < r + \epsilon$, we have that
\[
\left\| \sum_{j=1}^d A_j \otimes X_j \right\| = \|\varphi_A^{(m)}(X)\| \leq \|A\|_E \|X\|_\cE < 1.
\]
Since $\sum_{j=1}^d A_j \otimes X_j$ is unitarily equivalent to $\sum_{j=1}^d X_j \otimes A_j$, $\det(I - \sum_{j=1}^d X_j \otimes A_j) \neq 0$. 
It then follows from \eqref{eq:domf} that
$X \in \dom(f)$. Since $X$ was an arbitrary point in $(r+\epsilon)^{-1} \bB_{\cE}$ and $\epsilon$ was arbitrarily small, we have $(1/r)\bB_{\cE} \subset \dom(f)$.

Conversely, suppose that $R\bB_\cE \subset \dom(f)$ for some $R>0$. Then, by Corollary \ref{cor:bounded_rational}, for any $t<R$, the function $g(Z) = \left(I - \sum_{j=1}^d Z_j \otimes A_j \right)^{-1}$ is bounded on $t \bB_\cE$. 
It follows that for all $t<R$, the function $g_t(Z):= g(tZ)$ is bounded on $\bB_\cE$, and by Proposition \ref{prop:series} it has a convergent power series there.
Since the power series is equal to the Neumann series
\[
g_t(Z) = \sum_{m=0}^\infty \sum_{|w|=m} t^m Z^w \otimes A^w 
\]
near $0$, this is the power series in $\bB_\cE$. 
By Proposition \ref{prop:series}, the homogeneous terms are bounded: 
\[
\|\sum_{|w|=m} t^m A^w \otimes Z^w \| \leq \|g_t\|\,\,, \,\, \textrm{ for all } m = 0,1,2,\ldots
\]
whence $\limsup \|\sum_{|w|=m}  A^w \otimes Z^w \|^{1/m} \leq t^{-1}$.  
But by equations \eqref{eq:Hsr} and \eqref{eq:E_n_is_Haag}, 
\[
\rho_E(A) = \lim_{m\to \infty} \|\sum_{|w|=m} A^w \otimes Z^w\|^{1/m}, 
\]
so we conclude that $\rho_E(A) \leq t^{-1}$. 
Since this is true for all $t<R$, we obtain $\rho_E(A) \leq 1/R$. 

Here is an alternative proof of the converse direction, given without loss for the case $R=1$.
Recall  that $\rho_{\bB_E}(A) > 1$ is equivalent to $\rho\left(\sum_{j=1}^d A_j \otimes Z_j \right) > 1$ in $M_n \otimes A(\bB_{\cE})$. Let $\lambda \in \sigma\left( \sum_{j=1}^d A_j \otimes Z_j \right)$ be such that $|\lambda| > 1$. By Corollary \ref{cor:rational_spectrum}, there exists $X \in \ol{\bB_{\cE}}$, such that $\lambda \in \sigma\left(\sum_{j=1}^d A_j \otimes X_j\right)$. Multiplying $X$ by a unimodular scalar and $1/|\lambda|$, we get a new point $X' \in \bB_{\cE}$, such that $1 \in \sigma\left(\sum_{j=1}^d A_j \otimes X_j'\right)$. Therefore, $X' \notin \dom(f)$.
\end{proof}

\begin{remark}
The above theorem holds with the same proof if $f$ is taken to be an admissible rational expression with a not necessarily minimal descriptor representation $(A,b,c)$, and the domain is understood to be the domain of the expression. 
\end{remark}
\begin{remark}
In practice, we might want to use the above theorem for some concrete nc operator unit ball $\bD_Q$. 
In that case, the dual spectral radius one should use is $\rho_{\bD_Q^\circ}$, where the {\em polar dual} of $\bD_Q$ is defined as 
\[
\bD_Q^\circ = \left\{Y \in \bM^d : \|\sum_j Y_j \otimes X_j\| < 1 \textrm{ for all } X \in \bD_Q  \right\}.
\]
\end{remark}

\section{A case study: the famous example}

In \cite[Theorem A]{JMS-rat}, a version of Theorem \ref{thm:bdd_rat_func} was obtained for the case in which $\cE$ is the row operator space. 
Note that the only spectral radius considered in \cite{JMS-rat} was the one described by \eqref{eq:Pop_spec_rad}.
However, in that special case the spectral radii $\rho_{\bB_E} = \rho_{\fC_d}$ and $\rho_{\bB_\cE} = \rho_{\fB_d}$ corresponding to the column ball $\fC_d$ and the row ball $\fB_d$ coincide with \eqref{eq:Pop_spec_rad}, since the Haagerup tensor product of row/column operator spaces is the same as the minimal tensor product, which is again a row/column operator space (see Example \ref{ex:row_ball_min}). 
The result in \cite{JMS-rat} is sharper, in that it shows that $\rho_{\fB_d}(A) = \rho_{\fC_d}(A)<1$ is a necessary condition for $f$ to be bounded on $\fB_d$. 
The following example shows that this implication does not hold for all nc operator balls. 

\subsection{An nc rational function on the nc bidisc}\label{subsec:case}
The ``famous example'' is the rational function $f(z,w) = \frac{2zw - z - w}{2 - z - w}$, which is bounded on the bidisc. However, it is singular on the boundary at the point $(1,1)$. Using Agler's approach, one can obtain a realization for $f$ that extends to a bounded nc rational function on $\fD_2$. Here is a realization of an nc lift of $f$:
\[
f(Z,W) = \begin{pmatrix} 0 & 0 & 1 \end{pmatrix} \left( I - \underbrace{\begin{pmatrix} 1/2 & 0 & 0 \\ -1/2 & 0 & 0 \\ 1/\sqrt{2} & 0 & 0 \end{pmatrix}}_{A_1} Z - \underbrace{\begin{pmatrix} 0 & -1/2 & 0 \\ 0 & 1/2 & 0 \\ 0 & 1/\sqrt{2} & 0 \end{pmatrix}}_{A_2} W \right)^{-1} \begin{pmatrix} -1/\sqrt{2} \\ -1/\sqrt{2} \\ 0 \end{pmatrix}.
\]
It is not hard to check that this realization is minimal. 

In fact, $f$ defines a bounded nc function on $\fD_2$ with supremum norm equal to $1$. 
To see this, a straightforward Schur complement argument shows that $f$ also has an FM realization
\be\label{eq:realization}
f(Z,W) = -\left[ \begin{matrix} 1/\sqrt{2} & 1/\sqrt{2} \end{matrix}\right] \left[ \begin{matrix} Z & 0 \\0 & W \end{matrix} \right]
\left(I - \left[ \begin{matrix} 1/2 & -1/2 \\-1/2 & 1/2 \end{matrix} \right] \left[ \begin{matrix} Z & 0 \\0 & W \end{matrix} \right]\right)^{-1} \left[ \begin{matrix} 1/\sqrt{2} \\ 1/\sqrt{2} \end{matrix} \right]. 
\ee
Since, with 
\[
B = \left[ \begin{matrix} 1/\sqrt{2} & 1/\sqrt{2} \end{matrix} \right], 
C = \left[ \begin{matrix} 1/\sqrt{2} \\ 1/\sqrt{2} \end{matrix} \right], 
D = \left[ \begin{matrix} 1/2 & -1/2 \\-1/2 & 1/2 \end{matrix} \right], 
\]
we have that 
\be\label{eq:V}
V = \left[\begin{matrix}0 & B \\ C & D  \end{matrix}\right] \colon \bC \oplus \bC^2 \to \bC \oplus \bC^2
\ee
is a unitary matrix, we conclude from \cite[Theorem 8.1]{agler2015} that $f$ is bounded with $\|f\|_\infty \leq 1$. 

Since the largest $R$ for which $R\fD_2 \subset \dom(f)$ is $R=1$, we get from Theorem \ref{thm:bdd_rat_func} that $\rho_{\fD^\circ_2}(A) = 1$, where $\fD_2^\circ$ is the concrete nc operator unit ball we use to represent $(\min(\ell^\infty_d))^* = \max(\ell^1_d)$. 
Let us verify directly from the definition that $\rho_{\fD^\circ_2}(A) = 1$. 
Note that by Theorem \ref{thm:bdd_rat_func} this gives an alternative justification that $\dom(f)$ contains $\fD_2$; however, the theorem cannot be used to show that $f$ is bounded on $\fD_2$. 

Let $U_1$ and $U_2$ be the generators of $C^*_u(\bF_2)$.
By Theorem \cite[Corollary 8.13]{PisierBook}, we know that homogeneous polynomials in $U_1,U_2$ of degree $n$ span an operator space that is completely isometric to the $n$-fold Haagerup tensor product of $\max(\ell^1_d)$ with itself. 
Therefore, 
\[
\rho_{\fD_2^{\circ}}(A) = \lim_{n\to \infty} \left\| \sum_{|w|=n} A^{w} \otimes U^{w} \right\|^{1/n}.
\]
We write $A_1 = v_1 e_1^*$ and $A_2 = v_2 e_2^*$, where $\{e_1, e_2, e_3\}$ is the standard basis for $\bC^3$ and 
\[
v_1 = \begin{pmatrix} 1/2 \\ -1/2 \\ 1/\sqrt{2} \end{pmatrix} \text{ and } v_2 = \begin{pmatrix} -1/2 \\ 1/2 \\ 1/\sqrt{2} \end{pmatrix}.
\]
Then, $A_1^2 = \frac{1}{2} A_1$, $A_1 A_2 = -\frac{1}{2} v_1 e_2^*$, $A_2 A_1 = -\frac{1}{2} v_2 e_1^*$, and $A_2^2 = \frac{1}{2} A_2$. More generally, it follows by induction that for every non-empty word $w = w_1 \cdots w_n$ on the alphabet $\{1,2\}$, we have
\be\label{eq:Aalpha1}
A^{w} = \frac{1}{2^{n-1}}B_w, 
\ee
where 
\be\label{eq:Aalpha2}
B_w := (-1)^{m(w)}v_{w_1} e_{w_n}^* \quad \textrm{ and } \quad m(w) = \left| \{ i \mid w_i \neq w_{i+1}\}\right|.
\ee
Therefore 
\[
\left\| \sum_{|w|=n} A^{w} \otimes U^{w} \right\|^{1/n}
\leq \left( \frac{1}{2^{n-1}}\sum_{|w|=n} \|B_w \otimes U^{w}\| \right)^{1/n}
\leq \left(\frac{2^n}{2^{n-1}}\right)^{1/n} \to 1, 
\]
and this shows that $\rho_{\fD^\circ_2}(A) \leq 1$. 

To obtain the reverse inequality, we observe that by the universality of the $U$'s, 
\[
\left\|\sum_{|w|=n} A^{w} \otimes U^{w} \right \|\geq \left\|\sum_{|w|=n} A^{w}\right \|
\]
We now use \eqref{eq:Aalpha1} and \eqref{eq:Aalpha2}, noting that
\[
(-1)^{m(w)} = \begin{cases} 
1 & w_1 = w_n \\
-1 & w_1 \neq w_n 
\end{cases}.
\]
So in the sum $\sum_{|w|=n} A^{w}$ there are only four rank one matrices adding up, each matrix appears exactly $2^{n-2}$ times, and the signs work out in such a way that we obtain
\[
\left\|\sum_{|w|=n} A^{w}\right \| = \frac{1}{2^{n-1}}\left\|\sum_{|w|=n} B_w \right\| = \frac{1}{2^{n-1}} \left\|2^{n-2} \begin{pmatrix} 1 & -1 & 0 \\
-1 & 1 & 0 \\ 
0 & 0 & 0 \end{pmatrix} \right\| = 1. 
\]
It follows that 
\[
\left\| \sum_{|w|=n} A^{w} \otimes U^{w} \right\|^{1/n} 
\geq 1
\]
which implies that $\rho_{\fD_2^\circ}(A) \geq 1$. 
This concludes the direct verification that $\rho_{\fD_2^\circ}(A)=1$.

\subsection{Switching the roles of the operator space and its dual}\label{subsec:switch}
Switching the roles of $\fD_2$ and $\fD_2^\circ$ we now turn to the computation of $\rho_{\fD_2}(A)$. 
By Theorem \ref{thm:bdd_rat_func}, this quantity will determine scales of the nc diamond $\fD_2^\circ$ in which $f$ defines a rational function. 
By \eqref{eq:Aalpha1} and \eqref{eq:Aalpha2}, it is easy to compute
\[
\rho_{RS}(A) = \lim_{n\to \infty} \max_{|w|=n} \|A^{w}\|^{\frac{1}{n}} = \lim_{n\to\infty} 2^{-\frac{n-1}{n}} = \frac{1}{2}.
\]
However, we have seen in Example \ref{ex:nc_polydisk_min} that $\rho_{RS}$ is in general not equal to $\rho_{\fD_2}$. 
It turns out that $\rho_{\fD_2}(A) = \frac{1}{2}$ too, but we will need to work harder to establish this. 

Let $n \in \bN$ and let $\sigma_1,\ldots,\sigma_n \colon \min(\ell^{\infty}_2) \to B(\cH)$ be completely contractive maps. Write $\{f_1,f_2\}$ for the standard basis vectors in $\min(\ell^{\infty}_2)$. For $n \geq 1$ and $j \in {1,2}$, set
\[
T_{j,n} = \sum_{\substack{|w|=n \\ w_n = j}} \sigma_1(f_{w_1}) \sigma_2(f_{w_2}) \cdots \sigma_n(f_{w_n}).
\]
and for $n \geq 2$ set
\[
T_{i,j,n} = \sum_{\substack{|w|=n \\ w_1 = i, w_n = j}} \sigma_1(f_{w_1}) \sigma_2(f_{w_2}) \cdots \sigma_n(f_{w_n}).
\]

\begin{lemma} \label{lem:norm_estimate_T}
With the above notation, for every $n \geq 2$
and every $j \in \{1,2\}$, 
it holds that $\|T_{j,n}\| \leq 1$ and $\|T_{1,j,n}\pm T_{2,j,n}\|\leq 1$. 
\end{lemma}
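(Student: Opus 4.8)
The plan is to exploit the multiplicative structure of the sums defining $T_{j,n}$ and $T_{i,j,n}$, collapsing each into a product of operators whose norms I can control one factor at a time. The crucial observation is that summing over a free letter simply produces the image of the unit: writing $\mathbf{1} = f_1 + f_2$ for the unit of $\ell^\infty_2$ and abbreviating $\sigma_k(\mathbf{1}) = \sigma_k(f_1) + \sigma_k(f_2)$, the distributivity of (order-preserving) operator multiplication over addition gives, after expanding a product of the form $\prod_k\bigl(\sigma_k(f_1)+\sigma_k(f_2)\bigr)$,
\[
\sum_{w_k \in \{1,2\}} \sigma_k(f_{w_k}) = \sigma_k(\mathbf{1}).
\]

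First I would treat $T_{j,n}$. Only the last letter is constrained, while $w_1, \dots, w_{n-1}$ range freely, so the identity above telescopes the whole sum into
\[
T_{j,n} = \sigma_1(\mathbf{1})\,\sigma_2(\mathbf{1})\cdots \sigma_{n-1}(\mathbf{1})\,\sigma_n(f_j).
\]
Each $\sigma_k$ is contractive, and in $\ell^\infty_2$ one has $\|\mathbf{1}\|_\infty = 1$ and $\|f_j\|_\infty = 1$; hence every factor is a contraction in $B(\cH)$, and submultiplicativity of the operator norm gives $\|T_{j,n}\| \leq 1$. (For $n=2$ this reads $T_{j,2} = \sigma_1(\mathbf{1})\sigma_2(f_j)$, and the empty middle product is handled the same way.)

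For the second estimate I would keep the first and last letters fixed while letting the middle ones range freely, obtaining
\[
T_{i,j,n} = \sigma_1(f_i)\,\sigma_2(\mathbf{1})\cdots \sigma_{n-1}(\mathbf{1})\,\sigma_n(f_j).
\]
Since $T_{1,j,n}$ and $T_{2,j,n}$ share the identical tail $\sigma_2(\mathbf{1})\cdots\sigma_{n-1}(\mathbf{1})\,\sigma_n(f_j)$, I can factor it out on the right and combine the leading terms,
\[
T_{1,j,n} \pm T_{2,j,n} = \sigma_1(f_1 \pm f_2)\,\sigma_2(\mathbf{1})\cdots\sigma_{n-1}(\mathbf{1})\,\sigma_n(f_j).
\]
The decisive point is that $f_1 \pm f_2 = (1, \pm 1)$ again has $\ell^\infty_2$-norm exactly $1$, so $\|\sigma_1(f_1 \pm f_2)\| \leq 1$; the remaining factors are contractions as before, and $\|T_{1,j,n} \pm T_{2,j,n}\| \leq 1$ follows by submultiplicativity.

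I do not anticipate a genuine obstacle: the entire content is the distributive identity that collapses the free summations into images of the unit, and once that factorization is in place the bounds are immediate from submultiplicativity together with the fact that $\mathbf{1}$, $f_j$, and $f_1 \pm f_2$ are all unit vectors of $\ell^\infty_2$. It is worth noting that the argument uses only that each $\sigma_k$ is contractive as a map $\min(\ell^\infty_2) \to B(\cH)$ (equivalently, that $\sigma_k(\mathbf{1})$, $\sigma_k(f_j)$, and $\sigma_1(f_1\pm f_2)$ are contractions); the full strength of complete contractivity is not needed for this particular lemma.
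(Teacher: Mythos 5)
Your proof is correct and is essentially the paper's argument in unrolled form: the paper establishes the same bounds by induction on $n$ via the recursion $T_{1,j,n}\pm T_{2,j,n}=\sigma_1(f_1\pm f_2)\,T_{j,n-1}$ (with $T_{j,n-1}$ built from $\sigma_2,\dots,\sigma_n$) together with $T_{j,n}=T_{1,j,n}+T_{2,j,n}$, and your explicit factorization $\sigma_1(\mathbf{1})\cdots\sigma_{n-1}(\mathbf{1})\,\sigma_n(f_j)$ is exactly what that recursion produces when iterated. In both versions the decisive point is identical, namely that $\|f_1\pm f_2\|_{\ell^\infty_2}=1$ and that ordinary (not complete) contractivity of the $\sigma_k$ suffices.
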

\begin{proof}
For $n = 1$, we have that 
$\|T_{j,1}\| = \|\sigma(f_j)\| \leq 1$. 
If $n\geq 2$ then 
\begin{align*}
\|T_{1,j,n} \pm T_{2,j,n} \| 
&= \|\sigma_1(f_1) T_{j,n-1} \pm \sigma_1(f_2) T_{j,n-1} \| \\
&= \| \sigma_1(f_1 \pm f_2) T_{j,n-1} \| \leq \|T_{j,n-1}\|,
\end{align*}
because $\|f_1\pm f_2\| \leq 1$. On the other hand, if $n \geq 2$, then $\|T_{j,n}\| = \|T_{1,j,n} + T_{2,j,n}\| \leq \|T_{j,n-1}\|$. 
The proof is finished by induction on $n$.
\end{proof}

\begin{proposition}
Let $A = (A_1,A_2)$ be the tuple in the realization of the ``famous example" $f$, then $\rho_{\fD_2}(A) = 1/2$.
Consequently, $2 \fD_2^\circ \subset \dom(f)$. 
\end{proposition}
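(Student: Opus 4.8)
The plan is to prove the two inequalities $\rho_{\fD_2}(A)\ge 1/2$ and $\rho_{\fD_2}(A)\le 1/2$ separately, and then read off the containment $2\fD_2^\circ\subset\dom(f)$ from Theorem \ref{thm:bdd_rat_func}. The lower bound is immediate: since the Haagerup norm dominates the minimal one, one always has $\rho^{\min}_{\fD_2}(A)\le\rho_{\fD_2}(A)$, and by Example \ref{ex:nc_polydisk_min} the left-hand side equals $\rho_{RS}(A)$, which was just computed to be $1/2$. Hence $\rho_{\fD_2}(A)\ge 1/2$, and the real work is the reverse inequality.

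For the upper bound I would first invoke the multilinear characterization of the Haagerup norm (see \cite[Chapter 17]{PaulsenBook} and \cite[Section 1.5]{PisierBook}): applied at the level of the $M_3$-valued coefficients, it identifies
\[
\rho_{\fD_2}(A)=\lim_{n\to\infty}\Big(\sup_{\sigma_1,\ldots,\sigma_n}\big\|\textstyle\sum_{|w|=n}A^w\otimes\sigma_1(f_{w_1})\cdots\sigma_n(f_{w_n})\big\|\Big)^{1/n},
\]
where the supremum runs over all completely contractive $\sigma_1,\ldots,\sigma_n\colon\min(\ell^\infty_2)\to B(\cH)$ and $\{f_1,f_2\}$ is the basis of $\min(\ell^\infty_2)$; note that $\sigma_i(f_{w_i})$ is precisely the building block of Lemma \ref{lem:norm_estimate_T}. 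Substituting $A^w=2^{-(n-1)}(-1)^{m(w)}v_{w_1}e_{w_n}^*$ from \eqref{eq:Aalpha1}--\eqref{eq:Aalpha2} and using, as already observed in Section \ref{subsec:case}, that $(-1)^{m(w)}$ depends only on the endpoints — concretely $(-1)^{m(w)}=g(w_1)g(w_n)$ with $g(1)=1$, $g(2)=-1$ — the inner sum collapses into a combination of the operators $T_{i,j,n}$:
\[
\sum_{|w|=n}A^w\otimes\sigma_1(f_{w_1})\cdots\sigma_n(f_{w_n})=\frac{1}{2^{n-1}}\sum_{i,j\in\{1,2\}}g(i)g(j)\,v_i e_j^*\otimes T_{i,j,n}.
\]

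The next step is to bound $M:=\sum_{i,j}g(i)g(j)\,v_ie_j^*\otimes T_{i,j,n}$ by a constant independent of $n$. I would factor $M=\sum_j g(j)\,C_jR_j$ with column $C_j=\sum_i g(i)\,v_i\otimes T_{i,j,n}$ and row $R_j=e_j^*\otimes I$. Since the explicit vectors $v_1,v_2$ form an orthonormal set, a direct computation gives $C_j^*C_j=T_{1,j,n}^*T_{1,j,n}+T_{2,j,n}^*T_{2,j,n}$, and the polarization identity $2(T_1^*T_1+T_2^*T_2)=(T_1+T_2)^*(T_1+T_2)+(T_1-T_2)^*(T_1-T_2)$ combined with the estimates $\|T_{1,j,n}\pm T_{2,j,n}\|\le 1$ of Lemma \ref{lem:norm_estimate_T} yields $\|C_j\|\le 1$. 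Because $R_jR_{j'}^*=\delta_{jj'}I$, one has $MM^*=\sum_j C_jC_j^*$ and hence $\|M\|\le\sqrt2$. Therefore $\|\sum_{|w|=n}A^w\otimes\sigma_1(f_{w_1})\cdots\sigma_n(f_{w_n})\|\le\sqrt2\cdot 2^{-(n-1)}$ for every choice of the $\sigma_i$, so $\rho_{\fD_2}(A)\le\lim_n(\sqrt2\cdot2^{-(n-1)})^{1/n}=1/2$. Combining with the lower bound gives $\rho_{\fD_2}(A)=1/2$. Finally, regarding $\fD_2=\bB_{\min(\ell^\infty_2)}$ as the ball of the dual of the operator space $\max(\ell^1_2)$ (whose ball is $\fD_2^\circ$), Theorem \ref{thm:bdd_rat_func} applied with $r=\rho_{\fD_2}(A)=1/2$ gives $(1/r)\fD_2^\circ=2\fD_2^\circ\subset\dom(f)$.

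The step I expect to be the main obstacle is the first reduction: passing rigorously from the abstract Haagerup-tensor definition of $\rho_{\fD_2}$ to the supremum over completely contractive maps $\sigma_i$ while the $M_3$-valued coefficients $A^w$ sit in a minimal tensor factor. One must check that the Christensen--Sinclair type multilinear representation of completely contractive maps on the Haagerup tensor power is compatible with this coefficient slot (equivalently, that the matrix-level Haagerup norm is computed by such products). Once this reduction is secured, the remaining argument is the elementary bookkeeping sketched above, driven entirely by the endpoint-only structure of the sign $(-1)^{m(w)}$, the orthonormality of $v_1,v_2$, and Lemma \ref{lem:norm_estimate_T}.
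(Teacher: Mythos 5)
Your proof is correct and follows essentially the same route as the paper: the lower bound via $\rho^{\min}_{\fD_2}(A)=\rho_{RS}(A)=1/2$, the reduction of the Haagerup norm to a supremum over completely contractive maps $\sigma_1,\ldots,\sigma_n$ (this is exactly \cite[Corollary 5.3]{PisierBook}, so the step you flag as the main obstacle is a standard citation rather than a gap), and the collapse of the sum via the endpoint-only dependence of $(-1)^{m(w)}$ combined with Lemma \ref{lem:norm_estimate_T}. The only difference is in the final bookkeeping: the paper bounds the resulting $3\times 3$ block operator matrix entry by entry to get $6/2^{n-1}$, whereas your column--row factorization together with orthonormality of $v_1,v_2$ and the parallelogram law gives the marginally sharper bound $\sqrt{2}/2^{n-1}$; both yield the same limit $1/2$.
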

\begin{proof}
We already know that $\rho^{\min}_{\fD_2}(A) = \rho_{RS}(A) = 1/2$, therefore $\rho_{\fD_2}(A) \geq 1/2$, because the Haagerup tensor norm dominates the minimal one. 
We proceed to obtain the reverse inequality. 
To compute $\rho_{\fD_2}(A)$ we need to estimate the norms
\[
\left\|\sum_{|w| = n} A^w \otimes f_{w_1} \otimes_h \cdots \otimes_h f_{w_n}\right\|
\]
By an equivalent characterization of the Haagerup tensor norm \cite[Corollary 5.3]{PisierBook}, this norm is the supremum of 
\[
\left\|\sum_{|w| = n} A^w \otimes \sigma_1(f_{w_1}) \cdots \sigma_n(f_{w_n})\right\|
\]
where the supremum is over all completely contractive maps $\sigma_1,\ldots,\sigma_n \colon \min(\ell^{\infty}_2) \to B(\cH)$. 
Now, $A^w \otimes \sigma_1(f_{w_1}) \cdots \sigma_n(f_{w_n})$ is a $3 \times 3$ block operator matrix, and by \eqref{eq:Aalpha1} and \eqref{eq:Aalpha2} we can compute all its entries. 
For example, the $(1,1)$ entry is 
\[
\frac{1}{2^{n-1}} \left(\frac{1}{2} T_{1,1,n} - \left(-\frac{1}{2}\right) T_{2,1,n} \right)
\]
and the $(3,2)$ entry is
\[
\frac{1}{2^{n-1}} \left(-\frac{1}{\sqrt{2}} T_{1,2,n} + \frac{1}{\sqrt{2}} T_{2,2,n} \right) .
\]
Using the lemma, we see that every entry of the matrix has norm at most $\frac{1}{\sqrt{2}}\frac{1}{2^{n-1}}$, therefore 
\[
\rho_{\fD_2}(A) = \lim_n \sup_\sigma \left\|\sum_{|w| = n} A^w \otimes \sigma_1(f_{w_1}) \cdots \sigma_n(f_{w_n})\right\|^{1/n} \leq \lim_n \left(\frac{6}{2^{n-1}} \right)^{1/n} = 1/2. 
\]

\end{proof}

\subsection*{Acknowledgements} The authors thank the anonymous referee for several helpful suggestions, in particular for proposing a way to adapt an earlier proof of Theorem \ref{thm:gen_SpecRad} to the infinite-dimensional case. We also thank Michael Hartz and Michael Jury for their feedback and suggestions regarding this point.


\bibliographystyle{plain}
\bibliography{spectral_radius}

\end{document}